\tikzset{join/.code=\tikzset{after node path={%
\ifx\tikzchainprevious\pgfutil@empty\else(\tikzchainprevious)%
edge[every join]#1(\tikzchaincurrent)\fi}}}
\tikzset{>=stealth',every on chain/.append style={join},
         every join/.style={->}}
\tikzset{
    >=stealth',
    punkt/.style={
           rectangle,
           rounded corners,
           draw=black, very thick,
           text width=6.5em,
           minimum height=2em,
           text centered},
    pil/.style={
           ->,
           thick,
           shorten <=2pt,
           shorten >=2pt,}
}
\newcommand{\BS}{\boldsymbol}
\newcommand{\BB}{\mathbb}
\newcommand{\SF}{\mathsf}
\newcommand{\FR}{\mathfrak}
\newcommand{\SL}{\textsl}
\def\M{{\cal M}}
\def\E{{\cal E}}
\def\Mod{\textsf{M}}
\def\A{\textsf{A}}
\newcommand{\bea}{\begin{eqnarray}}
\newcommand{\eea}{\end{eqnarray}}
\newcommand{\nn}{\nonumber}
\newcommand{\Tr}{\textrm{Tr}}
\newcommand{\sbullet}{\textrm{\tiny{\textbullet}}}
\newcommand{\udl}{\underline}
\def\ga{\alpha}
\def\gb{\beta}
\def\gc{\gamma}
\def\gd{\delta}
\DeclareMathAlphabet{\mathpzc}{OT1}{pzc}{m}{it}
\def\vara{{\large\textrm{$\mathpzc{a}$}}}
\newtheorem{theorem}{Theorem}[section]
\newtheorem{proposition}[theorem]{Proposition}
\newenvironment{proof}[1][Proof]{\begin{trivlist}
\item[\hskip \labelsep {\bfseries #1}]}{\end{trivlist}}
\newenvironment{definition}[1][Definition]{\begin{trivlist}
\item[\hskip \labelsep {\bfseries #1}]}{\end{trivlist}}
\newenvironment{example}[1][Example]{\begin{trivlist}
\item[\hskip \labelsep {\bfseries #1}]}{\end{trivlist}}
\newenvironment{remark}[1][Remark]{\begin{trivlist}
\item[\hskip \labelsep {\bfseries #1}]}{\end{trivlist}}
\newcommand{\qed}{\nobreak \ifvmode \relax \else
      \ifdim\lastskip<1.5em \hskip-\lastskip
      \hskip1.5em plus0em minus0.5em \fi \nobreak
      \vrule height0.5em width0.5em depth0.00em\fi}
\def\nb{\mathfrak{n}}
\begin{document}
\thispagestyle{empty}
\begin{flushright} \small
UUITP-27/11\\
 \end{flushright}
\smallskip
\begin{center} \Large
{\bf Knot Weight Systems from Graded Symplectic Geometry}
  \\[12mm] \normalsize
{\bf Jian Qiu$^a$ and Maxim Zabzine$^b$} \\[8mm]
 {\small\it
 ${}^a$I.N.F.N. and Dipartimento di Fisica\\
     Via G. Sansone 1, 50019 Sesto Fiorentino - Firenze, Italy\\
   \vspace{.3cm}
     ${}^b$Department of Physics and Astronomy,
     Uppsala university,\\
     Box 516,
     SE-751\;20 Uppsala,
     Sweden\\
 }
\end{center}
\vspace{7mm}

\begin{abstract}
\noindent We show that from an even degree symplectic $NQ$-manifold, whose homological vector field $Q$ preserves the symplectic form, one can construct a weight system for tri-valent graphs with values in the $Q$-cohomology ring, satisfying the IHX relation. Likewise, given a representation of the homological vector field, one can construct a weight system for the chord diagrams, satisfying the IHX and STU relations.
 Moreover we show that the use of the 'Gronthendieck connection' in the construction is essential in making the weight system dependent only on the choice of the $NQ$-manifold and its representation.
\end{abstract}

\eject

\section{Introduction}
The Vassiliev knot invariants \cite{Vassiliev,Bar-Natan_I} compute the cohomology of the space of embeddings of one or several circles into $S^3$, and can be conveniently kept track of by means of the so called \emph{chord diagrams}. The precise definition of the chord diagrams will appear in sec.\ref{sec_graph}, and the \emph{weight system} for knot invariants is basically the scheme of assigning weights to the chord diagrams. The key criterium that a weight system has to meet is the IHX relation and STU relation; the IHX relation can be thought of as some kind of Jacobi identity and the STU relation is analogues to the matrix representation of these Jacobi identities. Thus Lie algebras and their matrix representations furnish the most important class of weight systems. While some preliminary calculation by Bar-Natan (see ref.\cite{Not_happy}) at low dimension showed that the Lie algebra weight systems exhaust the Vassiliev knot invariants, it has been established later \cite{Vogel} that at sufficient high dimension, the Lie algebras fail to provide all the weight systems.

In the work of Rozansky and Witten \cite{RW96}, and later in greater detail by Sawon \cite{Sawon}, it was pointed out that a hyperK\"ahler manifold plus a holomorphic vector bundle on it also gives a valid weight system. Kapranov \cite{Kapranov} showed that one may associate an $L_{\infty}$-algebra structure to a hyperK\"ahler manifold and one is led to the realization that an $L_{\infty}$-algebra structure in conjunction with a proper notion of representation provides a wider class of weight systems, which may take value in certain rings. As an example, the Rozansky-Witten (RW) weight-system takes value in the Dolbeault cohomology ring. Kontsevich also pointed out in ref.\cite{Kontsevich_RW} that this construction can be understood as applying the Chern-Weil construction to a $Q$-structure to form secondary characteristic classes.

In an earlier publication \cite{WilsonLoop} of the authors, we have built up a class of topological field theories (TFT's) that are associated with the $L_{\infty}$-algebra structures in the same way the Chern-Simons theory is associated with a Lie algebra. One can use some path integral manipulations and Ward identities to prove that these $L_{\infty}$ structures do give valid weight systems at the level of 'physics rigour'. Thus we feel that we owe the reader a purely mathematical and more careful account of our formalism. Besides, the treatment of an $L_{\infty}$-structure arising from a curved $NQ$-manifold is more subtle than we were able to appreciate at the writing of ref.\cite{WilsonLoop}, and we shall rectify this remiss in this paper.

 To summarize, the main results of the paper are theorems \ref{main_A} and \ref{main_B}. We take an even degree symplectic $N$-manifold $(\M,\omega)$ and use an exponential map to identify a neighbourhood round any point with a graded vector space $\mathbbm{R}$ of the same dimension. Theorem \ref{main_B} is a recipe for constructing cocycles of Lie algebra of vector fields on $\M$ from a cocycle of vector fields on $\mathbbm{R}$. And the use of the \emph{Grothendieck connection} in the formalism is to make the cohomology class of the cocycle independent of the connection chosen to construct the exponential map.
When $\M$ has an $\omega$-preserving homological vector field $Q$, and a graded bundle $\E$ over $\M$ equipped with a lift $Q^{\uparrow}$ of $Q$, one can construct two objects $\widehat Q$ and $\widehat T$ that give rise to an $L_{\infty}$-structure (up to $Q$-exact terms) and its representation. The object $\widehat Q$ is a vector field on $\mathbbm{R}$ and can be lifted to a Hamiltonian function $\Theta$, since $\mathbbm{R}$ inherits a symplectic structure from $\M$. Theorem \ref{main_A} states that, one can use the third (resp. first) Taylor coefficient of $\Theta$ (resp. $\widehat T$) as vertices and form a weight-system for the chord diagrams with values in the $Q$-cohomology ring of $\M$. The result is again independent of any non-canonical choices: connections, trivializations and the like.

As examples, we generalize the RW weight system based on hyperK\"ahler manifolds to include also the holomorphic symplectic manifolds. We also construct an equivariant version of the previous two cases, which illustrates the subtle role played by the Grothendieck connection. This is also an example that is not treated (as far as we know) by others. The necessity of the Grothendieck connection was stumbled upon when we tried to show in ref.\cite{RWCS} the independence of the cohomology class on the metric. We will formulate the result of ref.\cite{RWCS}, which is a piece of physically minded work, in a more mathematical manner here. Let us explain here the rough idea of obtaining an $L_{\infty}$-structure from an $NQ$ manifold. It is well known by now that if one expands the vector field $Q$ into a formal power series around one of its zeros, then the $n^{th}$ order Taylor coefficients give the structure constant of an $n$-ary bracket, and all these brackets combine to yield an $L_{\infty}$ structure.
If one wishes to expand round a generic point of $\M$, then one needs to manually subtract the $0^{th}$ coefficient and consider $L_{\infty}$ structure up to $Q$-exact terms. A large portion of this paper is about using the Grothendieck connection to subtract the $0^{th}$ term in a covariant way. This is the part that does not feature in Kapranov's construction, because there $Q$ is the Dolbeault operator $Q=\bar\partial$ and after applying the holomorphic exponential map for the Taylor expansion of $Q$, the $0^{th}$ coefficient is zero automatically, for details, see sec.\ref{sec_WfGViQc}.

The paper is organized as follows, we first present in sec.\ref{sec_TGC} the formula for the Gronthendieck connection and explore its geometrical meaning, in particular the Bott-Haefliger construction. Then in sec.\ref{sec_CC}, we review the Chevalley-Eilenberg (CE) complex of Lie algebra of vector fields and give the recipe for converting a CE cochain on a flat space to a cochain on a curved space with value in the ring of functions. In sec.\ref{sec_EM}, we give the formula for the exponential map and Gronthendieck connection in this special case and verify all properties claimed in sec.\ref{sec_TGC} and \ref{sec_EM}. Section \ref{sec_graph} contains the recipe of the mapping between CE complex and graph complex, which is a useful book keeping tool for the computation. Also our main theorems are stated in that section. Finally, in sec.\ref{sec_EX}, we give plenty of examples to endow substance to our construction.\bigskip

\noindent{\it Acknowledgement}:
The authors would like to thank E. Getzler for discussions on the problem of trivialization dependence of weight-systems, from which the current paper is conceived.
 M.Z. thanks the Simons center for geometry and physics where part of this work was carried out.
The research of M.Z. is supported by VR-grant 621-2008-4273.

\tableofcontents

\section{The Grothendieck Connection}\label{sec_TGC}
{\color{black}As a general preamble, a graded manifold $\M$ consists of a smooth manifold $M$ with a sheaf of freely generated graded commutative algebra on $M$. The \emph{section}s of this sheaf, denoted $C^{\infty}(\M)$, are often called the \emph{function}s on the graded manifold. The underlying manifold $M$ is called the reduced manifold or body of $\M$, denoted as $M=|\M|$. We will employ the 'functor of points' perspective in handling the graded manifolds, which allows one to treat the generators of the local graded algebra as coordinates and talk about 'points' on $\M$, for more expositions regarding this, see ref.\cite{NoteonSusy}.}

Consider a non-negatively graded manifold ($N$-manifold) $\M$, which is locally of the type
\bea \mathbbm{R}=\BB{R}^n\times \BB{R}^{n_1}[1]\times \BB{R}^{n_2}[2]\times\cdots\nn\eea
Choose the coordinates of $\M$ to be $x^A$, and the Greek letters $\xi^A$ for the corresponding coordinates of $\mathbbm{R}$. We will reserve $A,B,\cdots$ for the general coordinates, while $\mu,\nu,\cdots$ for the zero degree coordinates, namely $x^{\mu}$ is the coordinate of the body $|\M|$ and $\xi^{\mu}$ is the coordinate of $\BB{R}^n$. We will also borrow a general relativity jargon by calling $x^A$ \emph{curved} coordinates and $\xi^A$ \emph{flat} coordinates.

In a neighborhood $U\subset|\M|$, there is an isomorphism of algebras
\bea \varphi:\,C^{\infty}(\M\big|_{U})\to C^{\infty}(\mathbbm{R}).\label{local_iso}\eea
If we pick locally a family of such isomorphisms $\phi_x$, depending smoothly on $x$, such that the origin of $\mathbbm{R}$ is mapped to $x$ by $\phi_x$ (the 'functor of points' view is being enforced here!), then we can
consider the following differential operator\footnote{Unless otherwise declared, all derivatives are left derivatives; the right derivatives differ from the left ones by a sign $x^A\overleftarrow{\partial}_A=(-1)^{|x^A|}$.}, acting on the functions of $\M\big|_{U}\times\mathbbm{R}$
\bea D=dx^A\frac{\partial}{\partial x^A}-dx^A\Big[\frac{\partial\phi_x}{\partial x}\Big]^{\;B}_A\Big[\big(\frac{\partial\phi_x}{\partial\xi}\big)^{-1}\Big]_B^{\;C}\frac{\partial}{\partial\xi^C}
= dx^A\big(\frac{\partial}{\partial x^A}-\FR{G}_A^{\;C}\frac{\partial}{\partial\xi^C}\big).\label{Gro_conn}\eea
{\color{black}The second term in $D$ is called the \emph{Grothendieck connection}, it is a 1-form defined on the total space of a bundle $\FR{B}$ to be introduced shortly, along with its other properties. For now we just take its expression at its face value and we have}
\begin{proposition}\label{prop_flat}The Grothendieck connection is flat
\bea [u\cdot D,v\cdot D]=u\cdot D\,v\cdot D-(-1)^{|u||v|}v\cdot D\,u\cdot D=[u,v]\cdot D,~~~u,v\in\textrm{vect}(\M),~~u\cdot D=u^AD_A\label{flatness}.\eea
\end{proposition}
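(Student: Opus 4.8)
The plan is to reduce the graded Lie-algebra identity \eqref{flatness} to the single statement that the connection is \emph{flat as a vector field bracket}, $[D_A,D_B]=0$, and to prove the latter by producing a maximal family of ``flat sections'' that $D$ annihilates. Write $D_A=\partial/\partial x^A-\FR{G}_A^{\;C}\partial/\partial\xi^C$, so that $u\cdot D=u^AD_A$, and regard $\phi_x(g)$, for $g\in C^\infty(\M\big|_U)$, as a function of both the base point $x$ and the fibre coordinate $\xi$ on $U\times\mathbbm{R}$. \emph{Step 1 (flat sections).} First I would show that every $D_A$ kills the image of $\phi_x$, i.e.\ $D_A\,\phi_x(g)=0$ for all $g$. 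Since $D_A$ is a derivation and $\phi_x$ an algebra homomorphism, it is enough to check this on the coordinate generators $g=x^B$. Reading the defining formula of $\FR{G}$ with $\big[\partial\phi_x/\partial x\big]_A^{\;B}=\partial_{x^A}\phi_x(x^B)$ and $\big[\partial\phi_x/\partial\xi\big]_C^{\;B}=\partial_{\xi^C}\phi_x(x^B)$, the product $\FR{G}=\big[\partial\phi_x/\partial x\big]\big[\partial\phi_x/\partial\xi\big]^{-1}$ gives exactly $\FR{G}_A^{\;C}\,\partial_{\xi^C}\phi_x(x^B)=\partial_{x^A}\phi_x(x^B)$, whence $D_A\,\phi_x(x^B)=0$. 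Here the invertibility of $\partial\phi_x/\partial\xi$ is guaranteed because $\phi_x$ is an isomorphism.

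\emph{Step 2 (the curvature is vertical).} I would then view each $D_A$ as a genuine super vector field on $U\times\mathbbm{R}$ whose component in the $\partial/\partial x^E$ direction is the \emph{constant} $\delta_A^{\;E}$. In the graded Lie bracket of vector fields, the coefficient of $\partial/\partial x^E$ in $[D_A,D_B]$ is $D_A(\delta_B^{\;E})-(-1)^{|x^A||x^B|}D_B(\delta_A^{\;E})$, which vanishes because $D_A$ annihilates constants. Hence $[D_A,D_B]=R_{AB}^{\;\;\;E}\,\partial/\partial\xi^E$ is a purely vertical vector field, with curvature coefficients $R_{AB}^{\;\;\;E}(x,\xi)$; this avoids the sign-heavy second-order operator expansion entirely. \emph{Step 3 (curvature vanishes).} Applying this to the flat sections: since each $D_A$ kills $\phi_x(x^F)$ by Step 1, so does the commutator, and therefore $0=[D_A,D_B]\,\phi_x(x^F)=R_{AB}^{\;\;\;E}\,\partial_{\xi^E}\phi_x(x^F)=R_{AB}^{\;\;\;E}\,\big[\partial\phi_x/\partial\xi\big]_E^{\;F}$. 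Invertibility of $\partial\phi_x/\partial\xi$ forces $R_{AB}^{\;\;\;E}=0$, i.e.\ $[D_A,D_B]=0$.

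\emph{Step 4 (assembling the bracket).} Finally, for $u=u^A\partial_{x^A}$ and $v=v^B\partial_{x^B}$ in $\textrm{vect}(\M)$ the coefficients $u^A,v^B$ are functions of the curved coordinates only, so the vertical part of $D_A$ acts trivially on them and $D_A(v^B)=\partial_{x^A}v^B$. Expanding $[u\cdot D,v\cdot D]=[u^AD_A,v^BD_B]$ with the graded Leibniz rule splits it into the term $u^Av^B[D_A,D_B]$, which vanishes by Step 3, and the terms in which $D$ hits the coefficients, which recombine with the correct Koszul signs into $\big(u^A\partial_{x^A}v^B-(-1)^{|u||v|}v^A\partial_{x^A}u^B\big)D_B=[u,v]^B D_B=[u,v]\cdot D$. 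This is precisely \eqref{flatness}.

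The routine parts are the homomorphism/derivation bookkeeping in Step 1 and the Leibniz expansion in Step 4; the one genuinely delicate point is the graded sign accounting. Because $\phi_x$ is a morphism of graded-commutative algebras and the $\xi^C$ carry nonzero degree, I must make precise what the inverse $\big[\partial\phi_x/\partial\xi\big]^{-1}$ of a graded matrix means, commute $\partial/\partial\xi^C$ past the matrix entries under the left-derivative convention of the footnote, and check that the surviving terms in Step 4 assemble into the graded bracket $[u,v]$ with exactly the sign $(-1)^{|u||v|}$. Recasting Step 2 in terms of vector-field components (rather than composing differential operators) is the device that keeps these signs under control, so I expect this to be the only place where real care, rather than mere computation, is required.
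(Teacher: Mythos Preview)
Your argument is correct. The paper offers two proofs: Proof~1 is the bare sentence ``a direct calculation does the job,'' and Proof~2 exhibits $\FR{G}$ as the pullback of a left-invariant Maurer--Cartan form $g^{-1}dg$ on the pseudogroup $\Gamma$, from which flatness is automatic. Your approach is a conceptually organized version of the direct calculation: rather than expanding $[D_A,D_B]$ by brute force, you use the existence of a full family of flat sections $\phi_x(x^B)$ to kill the curvature. The identity $D_A\phi_x^B=0$ that you prove in Step~1 is in fact exactly what the paper invokes later, in the proof of Proposition~\ref{prop_key_id}, as following ``from the chain rule''; you are simply promoting it to the engine of the flatness proof itself. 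What your route buys is that it avoids both the sign-heavy explicit expansion and the pseudogroup machinery of Section~\ref{sec_BHC}; what the paper's Proof~2 buys is a structural explanation (Maurer--Cartan forms are always flat) and the transformation law Eq.~\eqref{gauge_trans_gen} as a byproduct. Your caution about graded signs in Step~4 is well placed but not a gap: since $u^A,v^B$ depend only on $x$, the vertical part of $D$ is inert on them and the Leibniz expansion reduces to the ordinary graded vector-field bracket on $\M$.
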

\begin{proof} 1: A direct calculation does the job\qed\end{proof}

\noindent {\bf Proof} 2: We present a second proof which exhibits the Grothendieck connection as the pull back of a left invariant Maurer-Cartan form $g^{-1}dg$ on a pseudogroup, which would immediately imply the flatness. The following is a special case of a construction due to Bott and Haefliger \cite{BottHaefliger}, who used it to construct characteristic classes for foliations. The ensuing discussion is valid for graded manifolds, but for first reading, one may consider only a smooth manifold.

\subsection{Bott-Haefliger Construction}\label{sec_BHC}
For an $N$-manifold $\M$, let $M$ be its body. For each $x\in\M$, we can attach a bundle (the \emph{Haefliger}) structure. Over a point $x$, the fibre consists of local isomorphisms $\varphi^{-1}_{x,U}$, as in Eq.\ref{local_iso}, defined on some open neighborhood $U\in M$ containing the body of $x$: $U\ni x|_{M}$, such that the point $x$ is mapped to the origin of $\mathbbm{R}$. The pseudogroup $\Gamma$ consists of diffeomorphisms between open sets in $\mathbbm{R}$, that is, between $C^{\infty}(\mathbbm{R}\big|_U)$ and $C^{\infty}(\mathbbm{R}\big|_V)$, $U,V\subset\BB{R}^n$. And let $\Gamma_0$ consist of those diffeomorphisms fixing $0\in\mathbbm{R}$. Note $\Gamma_0$ acts transitively on the fibre, so we have a principal bundle structure\footnote{To avoid difficulty with infinite dimension, one can consider first the bundle $\FR{B}^k$ of $k$-jets of local diffeomorphisms with structure group the $k$-jets $\Gamma^k_0$, then take the inverse limit.}
\bea &&\FR{B}\longleftarrow \Gamma_0\nn\\
&&\hspace{.1cm}\big{\downarrow}\nn\\
&&\M\nn\eea

For two points $x$ and $y$ nearby in $M$ and two such isomorphisms $\varphi^{-1}_{x,U},\varphi^{-1}_{y,V}$ (each of which is a point of the fibre over $x$ and $y$),
there is an element $g\in\Gamma$ defined on $\varphi^{-1}_{x,U}(U)\cap \varphi^{-1}_{y,V}(V)$ relating $\varphi^{-1}_{x,U}$ and $\varphi^{-1}_{y,V}$:
\bea \varphi^{-1}_{y,V}=g^{-1}\circ \varphi^{-1}_{x,U}\label{trans},\eea
where $\circ$ denotes composition and we have used $g^{-1}$ for later convenience. Eq.\ref{trans} allows us to identify $\FR{B}\big|_W$ with the pseudogroup $\Gamma$ for small enough $W$. To do so, pick a point $x$ and a fiducial isomorphism $\varphi_{x,U}$, let $W\subset U$ be a small open neighbourhood of $x|_{M}$, the identification $\psi:~\FR{B}\big|_W\rightarrow \Gamma$ goes as
\bea \forall y,~y|_M \in W\subset V~\textrm{and}~\varphi_{y,V},~~\textrm{let}~~\psi(\varphi_{y,V})=g,~~\textrm{where $g$ satisfies}~\varphi^{-1}_{y,V}=g^{-1}\circ \varphi^{-1}_{x,U}.\nn\eea
If we had started from a different fiducial $\varphi^{-1}_{x',U'}$, with $\varphi^{-1}_{x,U}=h^{-1}\circ \varphi^{-1}_{x',U'}$, then clearly
\bea \psi'=h\circ \psi\label{left_trans}.\eea
We can consider the pull back through $\psi$ of a left invariant 1-form $\FR{G}=\psi^*(g^{-1}dg)$, $g\in \Gamma$. One can see that this is a well defined 1-form on $\FR{B}$, i.e. independent of the choice $\varphi_{x,U}$.  Indeed, the change of $\varphi_{x,U}$ multiplies (rather, composes) an element $h\in\Gamma$ to $\psi$ from the left as in Eq.\ref{left_trans}. Since $d$ here does not act on $h$, $\psi^*(g^{-1}dg)$ is unchanged and $\FR{G}$ is a well-defined 1-form of $\FR{B}$. We shall write $\psi^*(g)$ simply as $g$.

We show next that this 1-form leads to the Grothendieck connection Eq.\ref{Gro_conn}, thereby establishing the flatness. To pull back the 1-form $\FR{G}$ to $\M$, one chooses a local section $\phi^{-1}_x:~\M\to\FR{B}$. We shall dispense with the subscript $U$, and also remark that
$\varphi^{-1}$ is used to denote points of $\FR{B}$, but the symbol $\phi^{-1}$ is reserved for sections of $\FR{B}$.
We also write the transition function $g^{-1}$ in Eq.\ref{trans} as
\bea \phi^{-1}_y=\Gamma_{yx}\circ \phi^{-1}_x.\nn\eea
Thus we need to compute the quantity
\bea\FR{G}= dy^A\Gamma_{yx}\big(\frac{\partial}{\partial y^A}\Gamma^{-1}_{yx}\big),\nn\eea
which by construction is independent of $x$. Taking advantage of this we have
\bea \Gamma_{yx}(\partial_{y^A}\Gamma^{-1}_{yx})=\Gamma_{yx}(\partial_{y^A}\Gamma^{-1}_{yx})\big|_{x=y}=-\partial_{y^A}\Gamma_{yx}\big|_{x=y}.\nn\eea
To compute this efficiently, we do the following
\bea 0=\partial_{y^A}\phi_x=\partial_{y^A}(\phi_y\circ\Gamma_{yx})
&\Rightarrow&\bigg[(\partial_{y^A}\phi^B_y)(\eta)+\frac{\partial \Gamma^C_{yx}}{\partial y^A}\frac{\partial\phi^B_y(\eta)}{\partial\eta^C}\bigg]\bigg|_{\eta=\Gamma_{yx}(\xi)}=0.\nn\eea
Evaluating the last equation at $x=y$,
\bea (\partial_{y^A}\phi^B_y)(\xi)+\frac{\partial \Gamma^C_{yx}}{\partial y^A}\frac{\partial\phi^B_y(\xi)}{\partial\xi^C}\bigg|_{x=y}=0,\nn\eea
leads to
\bea \FR{G}^C=dy^A\Big[\frac{\partial \phi_y}{\partial y}\Big]_{A}^{\;D}\Big[\big(\frac{\partial\phi_y}{\partial \xi}\big)^{-1}\Big]^{\;C}_D.\nn\eea
This is the formula given in Eq.\ref{Gro_conn} up to a minus sign.

The sign difference comes about in the following way: to get at Eq.\ref{Gro_conn}, one represents Lie$\,{}_{\Gamma}$ on $C^{\infty}(\mathbbm{R})$, and
the action is defined
\bea (h f)(\xi)=h^{-*}f(\xi)=f(h^{-1}(\xi)),~~h\in \Gamma,~f\in C^{\infty}(\mathbbm{R}).\nn\eea
in order for it to compose correctly. This is the origin of the minus sign.

The following remark is a simple consequence of the fact that the Grothendieck connection is built from the left invariant Maurer-Cartan form.
\begin{remark}\label{change_gron_conn}
While a change of trivialization for $\FR{B}$ effects a left multiplication $g\to f\circ g$, one can move about the fibre over $y$ by a right multiplication $g\to g\circ h^{-1}_y$, $h_y^{-1}\in \Gamma_0$. Indeed, under a change of local isomorphism, $\tilde \varphi^{-1}_{y,\tilde V}=h_y\circ \varphi^{-1}_{y,V}$, the group element $g\in\Gamma$ relating $\varphi^{-1}_{y,V}$ and the fiducial $\varphi^{-1}_{x,U}$ transforms as
\bea \tilde \varphi^{-1}_{y,\tilde V}=h_y\circ \varphi^{-1}_{y,V}=h_y\circ g^{-1}\circ \varphi^{-1}_{x,U}&\Rightarrow& g\to g\circ h_y^{-1}.\nn\eea
Similarly if one computes the Grothendieck connection with a different local section $\tilde\phi_x^{-1}=h_x\circ \tilde\phi_x^{-1}$, then
\bea \FR{G}\to h_x\circ dh_x^{-1}+h_x\circ \FR{G} \circ h_x^{-1},\label{gauge_trans_gen}\eea
which gives how $\FR{G}$ depends on the fibre of $\FR{B}$.
\end{remark}
\begin{remark}
The bundle $\FR{B}$ will not in general have a global section, since, as a principle bundle, the possession of a global section would trivialize the bundle. To forestall a possible confusion, the 1-form $\FR{G}$, albeit called a connection, is \emph{not} a connection of $\FR{B}$, because it is valued in Lie$\,{}_{\Gamma}$, instead of Lie$\,{}_{\Gamma_0}$.

However, one can take a group $K\subset \Gamma_0$ maximally compact, and consider the quotient $\FR{B}/K$ with fibre $\Gamma_0/K$, which is contractible. Thus $\FR{B}/K$ possesses a global section (this is a classic result, see theorem.12.2 \cite{Steenrod}). In sec.\ref{sec_EM}, we will construct a section of $\FR{B}/SO$ using an exponential map of a flow equation, and in special cases we also have sections of $\FR{B}/GL$, and $\FR{B}/SP$, even though the latter two groups are not maximally compact.
\end{remark}

Finally, for application of weight systems for knots we need the notion of graded vector bundles over graded manifolds, defined
as sheaves of freely generated $C^{\infty}(\M)$-modules over the body of $\M$, the coordinates of the fibre are the generators of this module, see ref.\cite{NoteonSusy}.
\bea
\begin{tikzpicture}[scale=.8]\label{graded_bundle}
  \matrix (m) [matrix of math nodes, row sep=2em, column sep=2em]
    {\;Q^{\uparrow} & {\cal E} & \BB{V}\\
      Q & {\cal M} &  \\ };
 \path[->]
 (m-1-3) edge  (m-1-2)
 (m-1-2) edge  node[right] {$\small{\pi}$} (m-2-2)
 (m-1-1) edge  node[right] {$\small{\pi}_*$} (m-2-1);
\end{tikzpicture}\label{def_rep}\eea
In particular, we are interested in the case when $\M$ is an $NQ$-manifold with homological vector field $Q$ with a lifting to $Q^{\uparrow}$ acting on sections of $\E$.
The lift $Q^{\uparrow}$ can be thought of as the representation of $Q$ \cite{Vaintrob}.

We can choose a trivialization locally on a patch $\E|_U=\M|_U\times \BB{V}$, and denote the coordinate of the fibre $\BB{V}$ as $z_{\alpha}$. Locally, we can write $Q^{\uparrow}$ as
\bea Q^{\uparrow}=Q^A(x)\frac{\partial}{\partial x^A}+(-1)^{\alpha\beta+\beta}T^{\beta}_{~\alpha}(x)z_{\beta}\frac{\partial}{\partial z_{\alpha}},\label{representation}\eea
where notations like $(-1)^{\alpha}$ denote $(-1)^{|z_{\ga}|}$. Note that since $Q^{\uparrow}$ acts on the sections of $\E$, its dependence on the fibre coordinate is linear.
The nilpotency of $Q^{\uparrow}$ entails
\bea (-1)^{\beta}QT^{\beta}_{~\alpha}+(-1)^{\gamma}T^{\beta}_{~\gamma}T^{\gamma}_{~\alpha}=0\label{normalized_MC}.\eea

The splitting of $Q^{\uparrow}$ is not independent of local trivializations of $\E$: under $z_{\ga}\to z_{\ga}+z_{\gb}\epsilon^{\gb}_{~\ga}$, the $T$ matrix transforms as
\bea \delta_{\epsilon}T^{\beta}_{~\alpha}=Q\epsilon^{\beta}_{~\alpha}
+T^{\beta}_{~\gamma}\epsilon^{\gamma}_{~\alpha}-(-1)^{\beta+\gamma}\epsilon^{\beta}_{~\gamma}T^{\gamma}_{~\alpha},\label{gauge_transform}\eea
thus $T$ is \emph{not} a section of the bundle $\textrm{End}\,\E$.
The trivialization independence problem will be handled properly in sec.\ref{sec_LAoVF}.

The local model for $\E$ is $\mathbbm{R}\times \BB{V}$ for some fixed graded vector space $\BB{V}$. To build a local isomorphism of $\E|_{U}$ with $\mathbbm{R}\times \BB{V}$ we may copy Eq.\ref{local_iso} almost verbatim, except that we demand $\varphi$ be independent of the fibre of $\E$. And we use $z_{\ga}$ as the fibre coordinate of $\E$ under a trivialization, and $\zeta_{\ga}$ the corresponding coordinate in local model.

\section{Covariant Cocycles}\label{sec_CC}
\subsection{Chevalley-Eilenberg Complex for Graded Lie Algebras}\label{sec_CECfGLA}
Consider a graded Lie algebra $\FR{g}$ equipped with the bracket of degree\footnote{We will only be interested in the Lie algebra of vector fields, whose bracket has degree 0, and Lie algebra of Hamiltonian functions, whose bracket has degree $-2$, but the formula given below is valid for general degrees.} $\nb$,
\bea
&&\deg [u,v]=\deg u+\deg v-\nb,\nn\\
&&[u,v]=-(-1)^{(|u|+\nb)(|v|+\nb)}[v,u],~~~u,v\in\FR{g}.\nn\eea
The Chevalley-Eilenberg (CE) chain complex is defined as $c_{\sbullet}=S^{\sbullet}(\FR{g}[\nb+1])$, i.e. the symmetric algebra of $\FR{g}$ with degree shifted by $\nb+1$ (sometimes, this shifting of degree is called suspension). Note that if $\FR{g}$ is of degree 0, then $S^{\sbullet}(\FR{g}[1])=\wedge^{\sbullet}\FR{g}$. An element of $c_{k}$ is written as
\bea (v_1,v_2,\cdots v_k)\in c_k,~~~~v_i\in\FR{g},\nn\eea
with the symmetry property
\bea (v_1,\cdots, v_i, v_{i+1}, \cdots, v_k) = (-1)^{(|v_i|+\nb+1)(|v_{i+1}|+\nb+1)}   (v_1,\cdots, v_{i+1}, v_{i}, \cdots, v_k) ~.\label{sym_prop}\eea
The boundary operator $\partial_I:~c_{\sbullet}\to c_{\sbullet-1}$ is defined as follows
\bea
\partial_I(v_1, \cdots,v_{k+1})& = &\sum_{i<j} (-1)^{s_{ij}} \big((-1)^{|v_i|}[v_i, v_j], v_1, \cdots , \hat{i}, \cdots , \hat{j}, \cdots , v_{k+1}\big)~,\nn\\
t_i&=&(|v_i|+\nb+1)(|v_1|+\cdots |v_{i-1}|+(i-1)(\nb+1)),\nn\\
s_{ij}&=&t_i+t_j-(|v_i|+\nb+1)(|v_j|+\nb+1).\label{CE_boundary}\eea
The sign factor $s_{ij}$ is called the Koszul sign; it is incurred by moving $v_i,v_j$ to the very front; $\hat i$ means skipping $v_i$.

Define the CE cochain complex $c^{\sbullet}$ as a multi-linear map from $c_{\sbullet}$ to a certain $\FR{g}$-module $\Mod$
\bea c^k(v_1,\cdots, v_k)\in \Mod,\nn\eea
The coboundary operator $\delta:\;c^{\sbullet}\to c^{\sbullet+1}$ is defined as follows
\bea
&&(\delta c^k)(v_1, \cdots,v_{k+1})=c^k\big(\partial_I(v_1, \cdots , v_{k+1})\big)\nn\\
&&\hspace{3.1cm}-\sum_{i} (-1)^{t_{i}+|v_i|\nb+(|v_i|+\nb)\deg c^k} v_i\circ c^k \big(v_1, \cdots , \hat{i}, \cdots , v_{k+1}\big)~,\label{CE_diff}\eea
where $\circ$ denotes $\FR{g}$-module action. The only condition on $\deg c^k$ from demanding $\delta^2=0$ is $\deg \delta c^k =\deg c^k+1$. In sec.\ref{sec_graph}, when the cochains are represented as graphs, $\deg c^k$ will be given in terms of the data of a graph
\bea \deg c^k=\nb E-(\nb+1)V,\label{deg_cochain}\eea
where $V$ and $E$ are the number of the vertices and edges of a graph. For now it is convenient to think of $\deg c^k$ naively as the degree carried by the symbol $c^k$.

What is more relevant for our application is the \emph{relative} CE complex. Let us quickly recall its definition. Let $K$ be the Lie-subgroup of the Lie group of $\FR{g}$, then a cochain $c^{\sbullet}$ is said to be horizontal w.r.t $K$ if $c^n(v_1,\cdots v_n)=0$ whenever any of the $v_i\in \FR{k}=\textrm{Lie}\,{}_K$. And $c^{\sbullet}$ is said to be invariant if it is invariant under the action of $K$. A cochain that is horizontal and invariant is said to be \emph{basic}. The relative CE complexes consist of cochains basic w.r.t $K$. In our case, the Lie group for $\FR{g}$ does not exist, rather it is the pseudo-group $\Gamma$, nevertheless the whole setup still makes sense.

We will also have need of the  \emph{cyclic bar complex} \cite{CycBar}.
Let $\A$ be a graded algebra, then at degree $k$, $B_k=\otimes^{k}\A[1]$ mod a relation of cyclic permutation. A typical element is commonly written as $[g_1|g_2|\cdots|g_k]\in B_{k}$, with the relation
\bea
[g_1|g_2|\cdots|g_k]\sim (-1)^{(|g_1|+1)(|g_2|+\cdots +|g_k|+k-1)}[g_2|\cdots |g_k|g_1].\label{cyc_sym_prop}\eea
The differential $\partial_H$ acts according to
\bea&&\partial_H[g_1|g_2|\cdots|g_k]=-\sum_{1\leq j\leq k}(-1)^{u_{j}+(|g_j|+1)}[g_jg_{j+1}|g_{j+2}|\cdots |g_k|g_1|\cdots|g_{j-1}],~~~~k+1\equiv1\nn\\
&&\hspace{2.1cm}u_j=\sum_{i=1}^{j-1}(|g_i|+1)\sum_{l=j}^k(|g_l|+1)~.\label{bar_diff}\eea
If $\FR{g}$ acts on $\A$, then we define an \emph{anti}-$\FR{g}$-module action on $B_k$ as
\bea &&u\circ[g_1|g_2|\cdots|g_k]=-\sum_{1\leq j\leq
k}(-1)^{w_{j}+|u|\nb+|u|+\nb}[g_1|\cdots |g_{j-1}|u\circ g_j|g_{j+1}|\cdots g_k],\label{g_action_B}\\
&&\hspace{2.15cm}w_{j}=(|u|+\nb)\sum_{l=1}^{j-1}(|g_l|+1),\nn\eea
satisfying $u\circ v\circ-(-1)^{(|u|+\nb)(|v|+\nb)}v\circ u\circ=-[u,v]\circ$.

The \emph{extended CE complex} is the tensor product of $c_{\sbullet}$ and $B_{\sbullet}$
\bea c_{p,q}=c_{p}\otimes B_q.\nn\eea
The total differential for the cochain complex $c^{p,q}$ is induced from $\partial_I$, $\partial_H$ and the $\FR{g}$-action Eq.\ref{g_action_B}, its full expression is relegated to the appendix.

\subsection{Lie Algebra of Vector Fields}\label{sec_LAoVF}
Consider several variants of the Lie algebra of tangent vector fields. Let $\M$ be an $N$-manifold, locally of type $\mathbbm{R}$, and $\E$ is a graded vector bundle over $\M$ with fibre $\BB{V}$.

1. Let $\FR{g}$ be the 'Lie algebra of formal $\Gamma_0$ vector-fields (see sec.2 of ref.\cite{BottHaefliger}), which can simply be thought of as vector fields on $\mathbbm{R}$ with formal power series as coefficients. Let $K$ be a subgroup of $GL(\mathbbm{R})$, in particular $K=SP(\mathbbm{R})$. We consider the relative CE complex $c^{\sbullet}(\FR{g},K)$ taking value in the real or complex number (trivial $\FR{g}$-module).

$1^\prime$. $\FR{g}=\textrm{vect}(\M)$, and the CE complex $\FR{c}^{\sbullet}$ taking value in functions of $\M$ with the obvious $\FR{g}$-module action.

2. Lie algebra $\FR{g}$ as in 1, and $\A=\textrm{End}(\BB{V})$, we consider the extended CE complex $c^{\sbullet,\sbullet}(\FR{g},K;\A)$ valued in $\BB{R}$ or $\BB{C}$, basic w.r.t $K$ and invariant under diagonal action of $\textrm{End}(\BB{V})$ on $\A$ (by conjugation).

$2^\prime$. $\FR{g}=\textrm{vect}(\M)$ and $\A=\textrm{End}(\E)$, we consider the extended CE complex $\FR{c}^{\sbullet,\sbullet}$ taking value in $C^{\infty}(\M)$.

The central result of this paper is the following prescription that turns the cocycles of type 1, 2 into cocycles of type $1^\prime$, $2^\prime$.  To keep the discussion uniform, we do not distinguish between Hamiltonian vector fields or Hamiltonian functions, one can think of the latter as a Lie algebra with Lie bracket of degree 0, or a Lie algebra with Poisson bracket of degree $-\nb$, with $\nb$ being the degree of the symplectic form.

Pick a base point $\SL{x}$, let $u\in\textrm{vect}(\M)$, and $\phi^{-1}_{\textsl{x}*}u\in\textrm{vect}(\mathbbm{R})$ be the push forward. Define\footnote{Hopefully, the hat here will not be confused with the hat in Eq.\ref{CE_diff}.}
\bea \hat{u}=\phi^{-1}_{\textsl{x}*}u-\iota_u\FR{G}.\label{def_hat}\eea
As this formula is quite central to our construction, let us briefly comment on it.
Note $\hat{u}$ always vanishes at the origin of $\mathbbm{R}$, so it takes value in the Lie algebra of $\Gamma_0$. Recall that $\FR{B}$ is a principal bundle with structure group $\Gamma_0$, under the local trivialization $\FR{B}\big|_U\sim \M\big|_U\times \Gamma_0$, we have the lift of $u$ as $u\to (u,\hat u)$. The following proposition says that this is a parallel lifting over $U$.
\begin{proposition}\label{prop_key_id}
\bea [\hat{u},\hat{v}]=\widehat{[u,v]}-u\circ\hat{v}+(-1)^{(|u|+\nb)(|v|+\nb)}v\circ\hat{u}\label{key_id},\eea
where $u\circ$ denotes the differentiation w.r.t the basis point
\bea u\circ =u^A(\textsl{x})\frac{\partial}{\partial \textsl{x}^A}.\label{vect_action}\eea
\end{proposition}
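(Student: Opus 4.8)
The plan is to split $\hat u$ into its horizontal and vertical pieces and reduce the whole identity to two facts already in hand: the flatness of $D$ (Proposition \ref{prop_flat}) and the elementary observation that pushforward by a fixed isomorphism is a homomorphism of Lie brackets. Abbreviate $P_u:=\phi^{-1}_{\textsl{x}*}u$ (a vector field on $\mathbbm{R}$, depending on the base point $\textsl{x}$) and $V_u:=\iota_u\FR{G}$ (the vertical, $\partial_\xi$-valued part), so that $\hat u=P_u-V_u$ and $u\cdot D=u\circ-V_u$ with $u\circ=u^A\partial_{\textsl{x}^A}$ acting only on the base point. Since the bracket in the statement is the graded bracket of vector fields on $\mathbbm{R}$ at a fixed base point, I would expand it bilinearly,
\[[\hat u,\hat v]=[P_u,P_v]-[P_u,V_v]-[V_u,P_v]+[V_u,V_v],\]
and show that the two ``diagonal'' terms furnish $\widehat{[u,v]}$ while the two ``cross'' terms assemble into $-u\circ\hat v+(-1)^{(|u|+\nb)(|v|+\nb)}v\circ\hat u$.

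For the diagonal terms I would argue as follows. Because $\phi^{-1}_{\textsl{x}}$ is, for fixed $\textsl{x}$, a genuine isomorphism, pushforward is a homomorphism of $\textrm{vect}(\mathbbm{R})$, whence $[P_u,P_v]=\phi^{-1}_{\textsl{x}*}[u,v]=\widehat{[u,v]}+\iota_{[u,v]}\FR{G}$, using $P_w=\hat w+V_w$. For the other diagonal term I would feed the splitting $u\cdot D=u\circ-V_u$ into the flatness $[u\cdot D,v\cdot D]=[u,v]\cdot D$; noting that $[u\circ,V_v]=u\circ V_v$ (the term in which $V_v$ would differentiate $u^A$ vanishes, as $u^A$ depends only on the base point) and $[u\circ,v\circ]=[u,v]\circ$, the flatness relation rearranges into
\[[V_u,V_v]=u\circ V_v-(-1)^{(|u|+\nb)(|v|+\nb)}v\circ V_u-\iota_{[u,v]}\FR{G}.\]
Adding the two diagonal contributions, the $\iota_{[u,v]}\FR{G}$ terms cancel and what survives is $\widehat{[u,v]}+u\circ V_v-(-1)^{(|u|+\nb)(|v|+\nb)}v\circ V_u$.

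The crux is the evaluation of the cross terms, and here I would exploit how the trivialization varies with the base point. Since $\phi_{\textsl{x}}$ and $\phi_{\textsl{x}+\epsilon u}$ are two isomorphisms $\mathbbm{R}\to\M$ carrying the origin to nearby points, they differ by a diffeomorphism of $\mathbbm{R}$, $\phi_{\textsl{x}+\epsilon u}=\phi_{\textsl{x}}\circ\gamma^u_\epsilon$; a short computation of the same type as in sec.\ref{sec_BHC} shows that the generator $\frac{d}{d\epsilon}\gamma^u_\epsilon\big|_0$ is precisely $\iota_u\FR{G}=V_u$, which is the geometric content of the Grothendieck connection. Pushing $v$ forward through $\phi^{-1}_{\textsl{x}+\epsilon u}=(\gamma^u_\epsilon)^{-1}\circ\phi^{-1}_{\textsl{x}}$ and differentiating at $\epsilon=0$ then gives the cross-term identity $u\circ P_v=[V_u,P_v]$, and by graded antisymmetry $[P_u,V_v]=-(-1)^{(|u|+\nb)(|v|+\nb)}\,v\circ P_u$. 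Substituting, the cross terms contribute $-u\circ P_v+(-1)^{(|u|+\nb)(|v|+\nb)}v\circ P_u$; combining with the diagonal remainder and regrouping through $P_w-V_w=\hat w$ collapses the $V$- and $P$-pieces into exactly $-u\circ\hat v+(-1)^{(|u|+\nb)(|v|+\nb)}v\circ\hat u$, as claimed.

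The main obstacle is the graded sign bookkeeping. The flatness of Proposition \ref{prop_flat} is stated with the operator sign $(-1)^{|u||v|}$, whereas the graded antisymmetry of $\FR{g}$ in sec.\ref{sec_CECfGLA} carries the bracket-degree $\nb$ and reads $(-1)^{(|u|+\nb)(|v|+\nb)}$; tracking these Koszul signs consistently through all four terms—so that the suspension promotes $|u|,|v|$ to $|u|+\nb,|v|+\nb$ and reproduces exactly the stated factor rather than a naive $(-1)^{|u||v|}$—is the one genuinely delicate part. The only other point demanding care is the justification, valid in the formal/graded category, that the base-point variation of $\phi_{\textsl{x}}$ integrates to the flow $\gamma^u_\epsilon$ of $\iota_u\FR{G}$ inside the pseudogroup $\Gamma$; the Bott--Haefliger description $\FR{G}=\psi^*(g^{-1}dg)$ of sec.\ref{sec_BHC} makes this transparent, since the identity $u\circ P_v=[V_u,P_v]$ is then just the statement that $\iota_u\FR{G}$ is the Maurer--Cartan logarithmic derivative of the family $\textsl{x}\mapsto\phi_{\textsl{x}}$.
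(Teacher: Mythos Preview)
Your argument is correct and rests on the same two ingredients as the paper's proof: the flatness of $D$ and the identity governing how the pushforward $P_v=\phi^{-1}_{\textsl{x}*}v$ varies with the base point. Your decomposition into four terms and the way you assemble them is exactly the computation the paper suppresses behind the phrase ``in conjunction with the flatness of $\FR{G}$ \ldots gives us the desired result.''

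The one genuine methodological difference is in how you obtain the cross-term identity $u\circ P_v=[V_u,P_v]$. You deduce it geometrically, by writing $\phi_{\textsl{x}+\epsilon u}=\phi_{\textsl{x}}\circ\gamma^u_\epsilon$ and identifying the generator of $\gamma^u_\epsilon$ with $\iota_u\FR{G}$ via the Maurer--Cartan interpretation of sec.~\ref{sec_BHC}. The paper instead derives the equivalent statement $\partial_A(\phi^{-1}_{\textsl{x}*}u)-[\FR{G}_A,\phi^{-1}_{\textsl{x}*}u]=0$ directly from the tautology $D_A\phi^B_{\textsl{x}}=0$ (which is just the chain rule, given the definition of $\FR{G}$ in Eq.~\ref{Gro_conn}). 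The paper's route is shorter and avoids invoking the flow $\gamma^u_\epsilon$; yours makes the geometric meaning of the identity more transparent. Regarding your worry about the sign discrepancy between Proposition~\ref{prop_flat} and the statement: for the Lie bracket of vector fields one has $\nb=0$, so $(-1)^{(|u|+\nb)(|v|+\nb)}=(-1)^{|u||v|}$ and the two conventions agree; the $\nb$ in the stated formula is there only so that the same identity reads correctly when one passes to Hamiltonian functions with the Poisson bracket, as explained at the start of sec.~\ref{sec_LAoVF}.
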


\begin{proof}: Clearly we have
\bea D_A\phi^B_{\textsl{x}}=\Big[\frac{\partial}{\partial\textsl{x}^A}-\FR{G}_B^A\frac{\partial}{\partial\xi^A}\Big]\phi_{\textsl{x}}^B=0\nn\eea
from the chain rule. Thus we have
\bea \partial_{A}(\phi^{-1}_{\textsl{x}*}u)-[\FR{G}_A,\phi^{-1}_{\textsl{x}*}u]=0.\label{temp4}\eea
This, in conjunction with the flatness of $\FR{G}$ given in Eq.\ref{flatness}, gives us the desired result\qed\end{proof}

Choose any \emph{global} section $\phi$ of $\FR{B}/K$, the exponential map to be introduced later is one possible choice, then prop.\ref{prop_key_id} leads to.
\begin{proposition}\label{prop_cochain_convert}
Pick any cochain $c^k$ of the type 1, the following cochain of type 1${}^\prime$ is well-defined
\bea \FR{c}^k(u_1,\cdots, u_k)=c^k(\hat{u}_1,\cdots,\hat{u}_k),~~~u_i\in\textrm{vect}(\M).\label{cochain_convert}\eea
i.e. the mapping $c^{\sbullet}$ to $\FR{c}^{\sbullet}$ is a morphism of complexes and $\FR{c}$ is valued in $C^{\infty}(\M)$.
\end{proposition}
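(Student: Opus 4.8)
The plan is to verify two independent assertions: first that $\FR{c}^k$ really descends to a cochain valued in $C^\infty(\M)$ rather than in functions on the total space $\FR{B}$, and second that the assignment $c^k\mapsto\FR{c}^k$ commutes with the coboundary operators, i.e. $\delta\FR{c}^k=\widehat{\delta c^k}$.

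For well-definedness I would argue as follows. Each $\hat u_i=\phi^{-1}_{\textsl{x}*}u_i-\iota_{u_i}\FR{G}$ is, by the discussion following Eq.\ref{def_hat}, a $\textrm{Lie}\,{}_{\Gamma_0}$-valued function on $\FR{B}$, and the map $u\mapsto\hat u$ is $\BB{R}$-linear and degree-preserving. Hence $\FR{c}^k(u_1,\dots,u_k)=c^k(\hat u_1,\dots,\hat u_k)$ inherits the graded symmetry Eq.\ref{sym_prop} directly from $c^k$, and is a priori a function on $\FR{B}$. The point is that a type-$1$ cochain lies in the relative complex $c^\bullet(\FR{g},K)$, i.e. it is $K$-basic. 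Moving along the $K$-fibre of $\FR{B}\to\FR{B}/K$ acts on $\FR{G}$, and thus on each $\hat u_i$, by the gauge transformation Eq.\ref{gauge_trans_gen} with $h\in K$, which is a conjugation together with a Maurer--Cartan term; $K$-basicness of $c^k$ (horizontality plus $K$-invariance) is exactly the statement that $c^k(\hat u_1,\dots,\hat u_k)$ is annihilated by the vertical directions and is $K$-invariant. Therefore it is the pullback of a function on $\FR{B}/K$, and applying the chosen global section $\phi$ of $\FR{B}/K$ produces the desired element of $C^\infty(\M)$.

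For the chain-map property I would expand both sides of $\delta\FR{c}^k=\widehat{\delta c^k}$ using Eq.\ref{CE_diff}. On the type-$1$ side the module is trivial (constants), so the module-action term of Eq.\ref{CE_diff} drops and $(\delta c^k)(\hat u_1,\dots,\hat u_{k+1})=c^k\big(\partial_I(\hat u_1,\dots,\hat u_{k+1})\big)$, a sum over pairs of $c^k$ evaluated on the flat bracket $[\hat u_i,\hat u_j]$. On the type-$1'$ side, $(\delta\FR{c}^k)(u_1,\dots,u_{k+1})$ carries both a $\partial_I$-term built from the curved bracket $[u_i,u_j]$ (equivalently $c^k$ on $\widehat{[u_i,u_j]}$) and a genuine module-action term $\sum_i\pm\,u_i\circ\FR{c}^k(\dots,\hat{i},\dots)$, where $u_i\circ=u_i^A\partial_{\textsl{x}^A}$ is base-point differentiation as in Eq.\ref{vect_action}. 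The bridge is prop.\ref{prop_key_id}: substituting $[\hat u_i,\hat u_j]=\widehat{[u_i,u_j]}-u_i\circ\hat u_j+(-1)^{(|u_i|+\nb)(|u_j|+\nb)}u_j\circ\hat u_i$ into the flat differential splits it into a $\widehat{[\,\cdot\,,\,\cdot\,]}$ part and a correction part. Since $u\mapsto\hat u$ is degree-preserving, the Koszul sign $s_{ij}$ is unchanged and the first part reproduces the $\partial_I$-term of $\delta\FR{c}^k$ verbatim. The correction part must then reassemble into the module-action term; the mechanism is Leibniz, because $u_i\circ$ differentiates only the base-point dependence and hence $u_i\circ c^k(\hat u_1,\dots,\hat{i},\dots)=\sum_{j\neq i}\pm\,c^k(\hat u_1,\dots,u_i\circ\hat u_j,\dots,\hat{i},\dots)$. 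Both the correction terms and the module-action term are thus indexed by ordered pairs $(i,j)$ with summand $c^k$ evaluated on a single factor $u_i\circ\hat u_j$, the antisymmetric pair structure of prop.\ref{prop_key_id} supplying exactly the two contributions ($j>i$ and $j<i$) that complete the sum over $i$.

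The main obstacle I anticipate is not conceptual but the Koszul-sign bookkeeping in this last matching. What must be checked is that, after transporting $u_i\circ\hat u_j$ to the argument slot dictated by $\partial_I$ and comparing with the prefactor $(-1)^{t_i+|v_i|\nb+(|v_i|+\nb)\deg c^k}$ of Eq.\ref{CE_diff}, the signs agree term by term. I would organize this by fixing an ordered pair $(i,j)$, isolating its two occurrences (one from the flat bracket, one from the Leibniz differentiation) and verifying that the suspension shift $\nb+1$, the cochain degree $\deg c^k$, and the transposition signs conspire to match; degree-preservation of $u\mapsto\hat u$ guarantees no hidden shift spoils the comparison.
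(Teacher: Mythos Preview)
Your proposal is correct and follows essentially the same route as the paper: well-definedness via $K$-basicness and the global section of $\FR{B}/K$, then the chain-map property by expanding $\delta c^k(\hat u_1,\dots,\hat u_{k+1})$, invoking prop.\ref{prop_key_id} to split $[\hat u_i,\hat u_j]$, and reassembling the correction terms into the module-action piece via the Leibniz rule for $u_i\circ$. The only difference is one of completeness: the paper carries out the Koszul-sign bookkeeping you flag as the anticipated obstacle, and in doing so writes down the precise $\FR{g}$-module action on $\FR{c}^k$ (their Eq.\ref{module_action1}) that makes the match with Eq.\ref{CE_diff} literal, whereas you leave this as a verification to be done.
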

\begin{proof}
In general the lhs of Eq.\ref{cochain_convert} only gives a function on $\FR{B}$. But if
$c^{\sbullet}$ is basic w.r.t the group $K$, the lhs of Eq.\ref{cochain_convert} gives a function on the quotient $\FR{B}/K$. Then through a global section of $\FR{B}/K$, one can pull back this function to $\M$. For similar discussions, see also sec.4 \cite{BottSegal}.

To show the mapping is a morphism
\bea
&&\delta c^k (\hat v_1, \cdots,\hat v_{k+1})\nn\\
& = &\sum_{i<j} (-1)^{s_{ij}} c^k \big((-1)^{|v_i|}[\hat v_i, \hat v_j], \hat v_1, \cdots , \hat{i}, \cdots , \hat{j}, \cdots , \hat v_{k+1}\big)~,\nn\\
& = &\sum_{i<j} (-1)^{s_{ij}} \FR{c}^k \big((-1)^{|v_i|}[v_i, v_j], v_1, \cdots , \hat{i}, \cdots , \hat{j}, \cdots , v_{k+1}\big)~,\nn\\
&&+\sum_{i<j} (-1)^{s_{ij}} c^k \big((-1)^{|v_i|}\big(-v_i\circ\hat{v}_j+(-1)^{(|v_i|+\nb)(|v_j|+\nb)}v_j\circ\hat{v}_i\big), \hat v_1, \cdots , \hat{i}, \cdots , \hat{j}, \cdots , \hat v_{k+1}\big)~,\nn\\
& = &\sum_{i<j} (-1)^{s_{ij}} \FR{c}^k \big((-1)^{|v_i|}[v_i, v_j], v_1, \cdots , \hat{i}, \cdots , \hat{j}, \cdots , v_{k+1}\big)~,\nn\\
&&-\sum_{i\neq j} (-1)^{t_{i}+|v_i|+(|v_i|+\nb)(|v_1|+\cdots |v_{j-1}|+(j-1)(\nb+1))} c^k \big(\hat v_1, \cdots , \hat{i}, \cdots , v^i\circ\hat v_j,\cdots, \hat v_{k+1}\big)\label{above}.\eea
where the symbols $t_i,\,s_{ij}$ were defined in Eq.\ref{CE_boundary}. Note that the last term of Eq.\ref{above} can be written so regardless of whether $i<j$ or not, because $(|v_i|+\nb)(|v_i|+\nb+1)=0\textrm{mod}2$.

Thus if we set the $\FR{g}$-module action to be
\bea &&(u\circ \FR{c}^k)(v_1,\cdots,v_k)=(-1)^{(|u|+\nb)\deg c^k}\times\nn\\
&&\hspace{2.3cm}\sum_{i}(-1)^{(|u|+\nb)(|v_1|+\cdots |v_{i-1}|+(i-1)(\nb+1))}c^k(\hat v_1,\cdots, (-1)^{(\nb+1)(|u|+\nb)}u\circ \hat v_i ,\cdots, \hat v_k),\label{module_action1}\eea
The last term of Eq.\ref{above} is written as
\bea -\sum_{i} (-1)^{t_{i}+\nb|v_i|+(|v_i|+\nb)\deg c^k}v_i\circ\FR{c}^k \big(v_1, \cdots , \hat{i}, \cdots , v_{k+1}\big)\nn,\eea
in agreement with Eq.\ref{CE_diff}. Thus the rhs of Eq.\ref{above} is the differential of a cochain $\FR{c}^k$ valued in $C^{\infty}(\M)$ with action defined in Eq.\ref{module_action1}.
Note that in this equation, the sign $(-1)^{(\nb+1)(|u|+\nb)}$ can be understood as resulting from commuting the action of $u$ (of degree $|u|+\nb$) across the suspension (of degree $\nb+1$)\qed\end{proof}
\begin{remark}\label{one_remark}
In the end of section \ref{sec_CD}, the importance of $c^{\sbullet}$ being basic w.r.t $K$ will be understood at a much more concrete level.
\end{remark}

Next we investigate how does $\hat u$ respond to a change of the global section $\tilde \phi^{-1}_{\textsl{x}}=h_{\textsl{x}}\circ \phi^{-1}_{\textsl{x}}$, where $h_{\SL{x}}\in\Gamma_0$, i.e. a diffeomorphism of $\mathbbm{R}$: $\xi\to h_{\SL{x}}(\xi)$ preserving the origin.
From Eq.\ref{gauge_trans_gen}, we read off (the sign flip as well as the change $h\to h^{-1}$ has been explained there too)
\bea\hat u \stackrel{h_{\SL{x}}}{\to} h_{\SL{x}*}\phi^{-1}_{\SL{x}*}u+\iota_u\big(h^{-1}_{\SL{x}}dh_{\SL{x}}-h^{-1}_{\SL{x}}\FR{G}h_{\SL{x}}\big).\nn\eea
Letting $h$ be infinitesimal and generated by $\Psi\in\textrm{vect}(\mathbbm{R})$, the change in $\hat u$ is
\bea \delta_{\Psi}\hat u\big|_{h_{\SL{x}}(\xi)}=\big(h_{\SL{x}*}\phi^{-1}_{\SL{x}*}u-\phi^{-1}_{\SL{x}*}u\big|_{h_{\SL{x}}(\xi)}\big)+\iota_u\big(d\Psi-[\FR{G},\Psi]\big).\nn\eea
In this limit, the first round brace constitutes the definition of a Lie derivative and hence gives
$-L_{\Psi}\phi^{-1}_{\SL{x}*}u=-[\Psi,\phi^{-1}_{\SL{x}*}u]$, leading to
\begin{proposition}
Under an infinitesimal change of local isomorphism as above, $\hat u$ changes according to
\bea \delta_{\Psi}\hat u=[\hat u,\Psi]+u\circ\Psi\label{change_hat_u},\eea
where $u\circ$ is as in Eq.\ref{vect_action}.
\end{proposition}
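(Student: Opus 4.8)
The plan is to take the displayed expression for $\delta_{\Psi}\hat u$ derived immediately above the statement as the starting point and to simplify its two summands separately, recombining them at the end through the defining relation $\hat u=\phi^{-1}_{\SL{x}*}u-\iota_u\FR{G}$ of Eq.\ref{def_hat}. The first parenthesis, $h_{\SL{x}*}\phi^{-1}_{\SL{x}*}u-\phi^{-1}_{\SL{x}*}u\big|_{h_{\SL{x}}(\xi)}$, is by construction the first-order variation of the flat vector field $\phi^{-1}_{\SL{x}*}u$ along the flow generated by $\Psi$; in the infinitesimal limit this is minus a Lie derivative, and as already recorded in the text it equals $-L_{\Psi}\phi^{-1}_{\SL{x}*}u=-[\Psi,\phi^{-1}_{\SL{x}*}u]$. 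So only the second summand requires work.

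For the second summand I would unwind the $dx^A$-form structure explicitly. Here $d=dx^A\partial_{\SL{x}^A}$ is the de Rham differential in the base-point directions, while $\FR{G}=dx^A\FR{G}_A$ is the Grothendieck connection, a 1-form in $dx^A$ valued in $\textrm{Lie}\,{}_{\Gamma_0}\subset\textrm{vect}(\mathbbm{R})$; since $\Psi$ carries no $dx$-form degree, $[\FR{G},\Psi]=dx^A[\FR{G}_A,\Psi]$ is again a 1-form valued in vector fields on $\mathbbm{R}$. Contracting with $u=u^A\partial_{\SL{x}^A}$ and letting $\iota_u$ pick out $u^A$ against $dx^A$, I would use that the interior product distributes:
\bea \iota_u\,d\Psi=u^A\partial_{\SL{x}^A}\Psi=u\circ\Psi,\qquad \iota_u[\FR{G},\Psi]=[u^A\FR{G}_A,\Psi]=[\iota_u\FR{G},\Psi],\nn\eea
the first being exactly the operation $u\circ$ of Eq.\ref{vect_action}, and the second holding because $\Psi$ and the bracket output both sit in form-degree zero, so the contraction passes through the bracket unobstructed. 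Hence the second summand equals $u\circ\Psi-[\iota_u\FR{G},\Psi]$.

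Adding the two summands gives $\delta_{\Psi}\hat u=-[\Psi,\phi^{-1}_{\SL{x}*}u]-[\iota_u\FR{G},\Psi]+u\circ\Psi$. The final step is to invoke graded antisymmetry of the (degree-$0$) vector-field bracket to rewrite $-[\Psi,\phi^{-1}_{\SL{x}*}u]$ as $[\phi^{-1}_{\SL{x}*}u,\Psi]$ and then use bilinearity to merge the two brackets, $[\phi^{-1}_{\SL{x}*}u,\Psi]-[\iota_u\FR{G},\Psi]=[\phi^{-1}_{\SL{x}*}u-\iota_u\FR{G},\Psi]=[\hat u,\Psi]$, which produces the asserted identity $\delta_{\Psi}\hat u=[\hat u,\Psi]+u\circ\Psi$.

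The one place I expect to need care is the sign discipline. Concretely, I must check that the graded antisymmetry really removes the prospective factor $(-1)^{|u||\Psi|}$ when turning $-[\Psi,\phi^{-1}_{\SL{x}*}u]$ into $[\phi^{-1}_{\SL{x}*}u,\Psi]$ (this is automatic under the paper's conventions for the relevant degrees, but should be spelled out), and that $\iota_u$ commutes past both $d$ and the bracket with no leftover sign, which is precisely what makes the two contraction identities above clean. Beyond that the argument is pure substitution into the already-derived variation formula.
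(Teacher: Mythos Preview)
Your proposal is correct and follows exactly the route the paper takes: the text immediately preceding the proposition already derives the expression $\delta_{\Psi}\hat u=\big(h_{\SL{x}*}\phi^{-1}_{\SL{x}*}u-\phi^{-1}_{\SL{x}*}u\big|_{h_{\SL{x}}(\xi)}\big)+\iota_u\big(d\Psi-[\FR{G},\Psi]\big)$ and identifies the first bracket as $-[\Psi,\phi^{-1}_{\SL{x}*}u]$, then simply says ``leading to'' the proposition. Your write-up fills in the two contraction identities $\iota_u\,d\Psi=u\circ\Psi$ and $\iota_u[\FR{G},\Psi]=[\iota_u\FR{G},\Psi]$ and the recombination via $\hat u=\phi^{-1}_{\SL{x}*}u-\iota_u\FR{G}$, which is precisely the algebra the paper elides.
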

If we pick $c^k$ to be a cocycle, then from prop.\ref{prop_cochain_convert} $\FR{c}^k$ is also a cocycle. Now we can show that the cohomology class of $\FR{c}^k$ is independent of the choice of the section $\phi$.
\begin{proposition}\label{prop_connection_ind}
Under an infinitesimal change of the section $\phi^{-1}_{\SL{x}}\to h_{\SL{x}}\circ\phi^{-1}_{\SL{x}}$, where $h$ is generated by $\Psi\in \textrm{vect}(\mathbbm{R})$ fixing the origin. And pick $c^k$-closed,
then the class defined by
\bea \FR{c}^k(u_1,\cdots, u_k)=c^k(\hat{u}_1,\cdots,\hat{u}_k),~~~u_i\in\textrm{vect}(\M)\nn\eea
is invariant.
\end{proposition}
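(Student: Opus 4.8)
The plan is to reduce the statement to an \emph{exactness} claim. Since $c^k$ is closed, Prop.\ref{prop_cochain_convert} already guarantees that $\FR{c}^k$ is a cocycle of type $1^\prime$, so it suffices to show that under the infinitesimal change of section generated by $\Psi$ the variation $\delta_\Psi\FR{c}^k$ is a coboundary, $\delta_\Psi\FR{c}^k=\delta\beta$ for some type-$1^\prime$ cochain $\beta$ of degree $k-1$. The natural candidate primitive is obtained by inserting the generator $\Psi$ into the cocycle,
\bea \beta(u_1,\cdots,u_{k-1})=c^k(\Psi,\hat u_1,\cdots,\hat u_{k-1}),\nn\eea
which is legitimate because $\Psi$ fixes the origin and hence, like each $\hat u_i$, takes values in $\textrm{Lie}\,{}_{\Gamma_0}$; morally $\beta=\iota_\Psi\FR{c}^k$.

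First I would vary the defining formula Eq.\ref{cochain_convert}. By the derivation preceding Eq.\ref{change_hat_u}, the entire effect of the section change is encoded in the variation of the hatted fields, so by Leibniz
\bea \delta_\Psi\FR{c}^k(u_1,\cdots,u_k)=\sum_i\pm\,c^k(\hat u_1,\cdots,\delta_\Psi\hat u_i,\cdots,\hat u_k),\nn\eea
into which I substitute the variation formula Eq.\ref{change_hat_u}, $\delta_\Psi\hat u_i=[\hat u_i,\Psi]+u_i\circ\Psi$. This splits the variation into a bracket piece carrying $[\hat u_i,\Psi]$ and a module-action piece carrying $u_i\circ\Psi$. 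In parallel I would expand $\delta\beta$ with the CE differential Eq.\ref{CE_diff}: its bracket part produces terms $c^k(\Psi,\widehat{[u_i,u_j]},\cdots)$, while its module-action part produces $u_i\circ\beta$, in which the base-point derivative $u_i\circ$ hits both $\Psi$ (giving $u_i\circ\Psi$) and the remaining $\hat u_j$ (giving $u_i\circ\hat u_j$).

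The heart of the argument is to reconcile the two expansions. I would feed the closedness of $c^k$ into the picture by evaluating $0=(\delta c^k)(\Psi,\hat u_1,\cdots,\hat u_k)$, whose bracket part supplies precisely the $[\Psi,\hat u_i]=\pm[\hat u_i,\Psi]$ terms together with the diagonal brackets $[\hat u_i,\hat u_j]$. Replacing $\widehat{[u_i,u_j]}$ through the key identity Prop.\ref{prop_key_id},
\bea \widehat{[u_i,u_j]}=[\hat u_i,\hat u_j]+u_i\circ\hat u_j-(-1)^{(|u_i|+\nb)(|u_j|+\nb)}u_j\circ\hat u_i,\nn\eea
the spurious $[\hat u_i,\hat u_j]$-terms in $\delta\beta$ cancel against the cocycle relation, the $u_i\circ\hat u_j$-terms cancel the module-action byproducts, and what survives is exactly the $[\hat u_i,\Psi]$ and $u_i\circ\Psi$ content of $\delta_\Psi\FR{c}^k$. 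Conceptually this is the Cartan homotopy formula $L_\Psi=\delta\iota_\Psi+\iota_\Psi\delta$ applied to the closed cochain $\FR{c}^k$: the section change acts as the Lie derivative $L_\Psi$, so $\delta_\Psi\FR{c}^k=\delta(\iota_\Psi\FR{c}^k)=\delta\beta$, and the class is invariant.

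I expect the main obstacle to be the sign bookkeeping rather than the conceptual structure: the suspension by $\nb+1$, the Koszul signs $s_{ij},t_i$ of Eq.\ref{CE_boundary}, and the graded symmetry Eq.\ref{sym_prop} must all be tracked through the insertion of $\Psi$ into a fixed slot. A genuine subtlety, not merely cosmetic, is that $\Psi=\Psi_{\textsl{x}}$ may depend on the base point, so the module action $u_i\circ$ --- being differentiation in $\textsl{x}$ --- acts on $\Psi$ as well; it is precisely this $u_i\circ\Psi$ term that must line up with the second piece of Eq.\ref{change_hat_u}, and forcing it to do so is what fixes the particular slot and sign in $\beta$. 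Finally I would check that $\beta$ descends to $\FR{B}/K$, i.e. that inserting $\Psi$ respects the $K$-basic property exploited in Prop.\ref{prop_cochain_convert}, so that its pullback by the section is well defined.
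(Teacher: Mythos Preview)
Your proposal is correct and follows essentially the same route as the paper: the paper defines the primitive $\FR{c}^{k-1}_\Psi$ by inserting $\Psi$ into a slot of $c^k$ (your $\beta=\iota_\Psi\FR{c}^k$), evaluates the cocycle condition $0=\delta c^k$ on $(\hat u_1,\ldots,\hat u_k,\Psi)$, and then uses the key identity Eq.\ref{key_id} together with Eq.\ref{change_hat_u} to identify the remainder with $\delta_\Psi\FR{c}^k$. The only cosmetic differences are that the paper spells out the case $k=2$ on a smooth manifold rather than general $k$, and places $\Psi$ in the last rather than the first slot; your Cartan-homotopy reading $L_\Psi=\delta\iota_\Psi+\iota_\Psi\delta$ is a clean way to summarize the same computation.
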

For clarity, we prove the proposition for a smooth manifold and $k=2$, which is more than sufficient to make clear the idea.
\begin{proof}
Define $c_{\Psi}^1(\hat u)=c^2(\hat u,\Psi)$, and using that $c^2$ is closed, we can compute
\bea 0=\delta c^2(\hat u,\hat v,\Psi)=c^2([\hat u,\hat v],\Psi)+c^2(\hat v,[\hat u,\Psi])-c^2(\hat u,[\hat v,\Psi]).\label{above1}\eea
Use Eq.\ref{key_id} to convert the first term of Eq.\ref{above1}
\bea c^2([\hat u,\hat v],\Psi)=c^2(\widehat{[u,v]},\Psi)-c^2(u\circ\hat v,\Psi)+c^2(v\circ\hat u,\Psi).\nn\eea
Add and subtract on the rhs the following terms $-c^2(\hat v,u\circ\Psi)+c^2(\hat u,v\circ\Psi)$, we get
\bea c^2([\hat u,\hat v],\Psi)=\delta\FR{c}^1_{\Psi}(u,v)+c^2(\hat v,u\circ\Psi)-c^2(\hat u,v\circ\Psi),\nn\eea
and for rhs of Eq.\ref{above1},
\bea 0=\delta\FR{c}^1_{\Psi}(u,v)+c^2(\hat v,u\circ\Psi+[\hat u,\Psi])-c^2(\hat u,v\circ\Psi+[\hat v,\Psi])
=\delta\FR{c}^1_{\Psi}(u,v)-c^2(\delta_{\Psi}\hat u,\hat v)-c^2(\hat u,\delta_{\Psi}\hat v).\nn\eea
Thus we can conclude
\bea\delta_{\Psi}\FR{c}^2(u, v)=\delta\FR{c}^1_{\Psi}(u,v),\nn\eea
thus the cohomology class is unaltered \qed
\end{proof}

Finally, we include the representation of a $Q$-structure introduced in Eq.\ref{def_rep}. Since $Q^{\uparrow}$ is a globally defined vector field on $\E$, one can define similarly the quantity
\bea \widehat T=\widehat{Q^{\uparrow}}-\widehat Q,\label{hat_T}\eea
and call it the \emph{representation} matrix of $Q$. Note that on the rhs $\widehat Q \in \textrm{vect}(\mathbbm{R})$ is regarded as a vector field of $\mathbbm{R}\times \BB{V}$ trivially through the direct product structure.

Since both $\widehat Q$ and $\widehat{Q^\uparrow}$ satisfy a similar relation derived from Eq.\ref{key_id},
\bea (\widehat{Q^{\uparrow}})^2=-Q^{\uparrow}\circ \widehat{Q^{\uparrow}},~~~~\widehat{Q}^2=-Q\circ \widehat{Q},\label{MC}\eea
we can derive
\bea \widehat T^2=(\widehat{Q^{\uparrow}}-\widehat Q)^2=-[\widehat Q,\widehat{Q^{\uparrow}}-\widehat Q]+(\widehat{Q^{\uparrow}})^2-\widehat{Q}^2=-[\widehat Q,\widehat T]-Q\circ\widehat Q+Q^{\uparrow}\circ \widehat{Q^{\uparrow}}.\label{above2}\eea
And we note that $Q^{\uparrow}=Q$ when restricted to $\M$, as $Q^{\uparrow}$ is a lift of $Q$ and the fibre dependence of $Q^{\uparrow}$ is linear. One can evaluate Eq.\ref{above2} at $z_{\ga}=0$ (not $\zeta_{\ga}=0$!, see the end of sec.\ref{sec_BHC} for notations)
\bea \widehat T^2\Big|_{\M}=-\big([\widehat Q,\widehat T]+Q\circ \widehat{T}\big)~\Big|_{\M}.\label{MC_adv}\eea
Besides, using Eq.\ref{change_hat_u}, we conclude that $\widehat T$ transforms
\bea \delta_{\epsilon}\widehat T^{\gb}_{~\ga}= \widehat T^{\beta}_{~\gamma}\epsilon^{\gamma}_{~\alpha}-(-1)^{\beta+\gamma}\epsilon^{\beta}_{~\gamma}\widehat T^{\gamma}_{~\alpha},\label{gauge_transform_improve}\eea
in contrast to Eq.\ref{gauge_transform}. Thus
\bea \widehat T \in \Gamma(\textrm{End}\,\E).\nn\eea

\section{Exponential Maps}\label{sec_EM}
In this section we give some concrete constructions using the exponential map.

First, we construct a section $\phi^{-1}_x:~\M\to \FR{B}$ (recall from sec.\ref{sec_BHC}).
By the folk-theorem for graded and super manifolds (for supermanifold case proved independently by Batchelor \cite{MR536951}, by Berezin
 \cite{berezin}, by  Gawedzki \cite{MR0489701} and with straightforward generalization to the graded case),
  one can always split $\M$ non-canonically into a direct sum of graded vector bundles
\bea \M\sim E_1[1]\oplus E_2[2]\oplus\cdots,\label{splitting}\eea
where each $E_i$ is over $M=|\M|$. For each vector bundle we choose a connection $A_{\mu}$, and also choose a connection $\Gamma^{\mu}_{\nu\rho}$ for $TM$. For convenience, we assume $\Gamma$ torsionless $\Gamma^{\mu}_{[\nu\rho]}=0$. We build a local isomorphism Eq.\ref{local_iso} by first building a local isomorphism $M\supset U\to \BB{R}^n$ through certain exponential map and then attach the graded vector bundle structure using parallel-transport.

Consider the body $M=|\M|$ first.
The isomorphism $U\to \BB{R}^n$ can be done in many ways. One way is to
use a \emph{geodesic exponential map}.
\bea \phi_x^{\mu}=x^{\mu}+\xi^{\mu}-\frac12\Gamma^{\mu}_{\ga\gb}\xi^{\ga}\xi^{\gb}+\Big(-\frac16\partial_{\gc}\Gamma^{\mu}_{\ga\gb}+\frac13\Gamma^{\mu}_{\kappa\gc}\Gamma^{\kappa}_{\ga\gb}\Big) \xi^{\gc}\xi^{\ga}\xi^{\gb}+{\cal O}(\xi^4),\label{exponent_plain}\eea
where all the $\Gamma$ are evaluated at $x$.

Alternatively one can choose a metric $g_{\mu\nu}$ for $|\M|$, and locally fixes a vielbein-a set of $n$ orthonormal basis for $\Gamma(TM)$, $e^{\mu}_a  \partial_{\mu}$ with $g_{\mu\nu}e^{\mu}_ae^{\nu}_b=\delta_{ab}$. The vielbeins are patched together with structure group $SO(n)$ instead of $GL(n)$
\bea \tilde e^{\mu}_a(y)\frac{\partial x^{\nu}}{\partial y^{\mu}}=e^{\nu}_b(x)U^b_{~a},~~U^a_{~b}\in SO(n).\nn\eea
Now one can form a different exponential map by using $e^{\mu}_a$ as the velocity-field for the flow equation, leading to
\bea \phi_{so}^{\mu}=x^{\mu}+\xi^ae^{\mu}_a+\frac12\big(e_b^{\rho}\partial_{\rho}e^{\mu}_a\big)\xi^{a}\xi^{b}+\frac16\big(e^{\sigma}_c\partial_{\sigma}(e^{\rho}_b\partial_{\rho}e^{\mu}_a))\big) \xi^{c}\xi^{a}\xi^{b}+{\cal O}(\xi^4).\label{exponent_ortho}\eea
where as before the $e^{\mu}_a$ are evaluated at ${x}$.

The following discussion up to prop.\ref{prop_global_section_sp} is well-known to the experts, yet we go through it carefully. At first sight, the exponential map $\phi_x$ does not give a section $M\to\FR{B}$, because this map not only depends on the base point $x$ around which we do the expansion, but also on the choice of the coordinate system in the neighbourhood of $x$. Namely $\phi_x$ is not quite invariant under local diffeomorphisms of $M$.
To investigate how $\phi_x$ reacts to a diffeomorphism, let $\tilde{{x}}^{\mu}={x}^{\mu}-v^{\mu}$ be infinitesimal diffeomorphism, where $v^{\mu}$ is a vector field vanishing at $x$, we check up to order $\xi^3$ that Eq.\ref{exponent_plain} satisfies
\bea \phi^{\mu}_{\tilde{{x}}}(\tilde\xi)= \phi^{\mu}_{{x}}(\xi)+{\cal O}(v^2),\label{equiv}\eea
where $\tilde \xi^{\mu}=\xi^{\mu}-\xi^{\rho}\nabla_{\rho}v^{\mu}$ is an infinitesimal $GL$ rotation. The relation Eq.\ref{equiv} and the formula for $\tilde{\xi}$ is in fact exact to all orders in $\xi$, which can be understood as follows: once the connection is fixed, the flow that defines the exponential map is uniquely fixed by the initial velocity $\xi^{\mu}$, thus the flow computed in the $x$-coordinate system equals the flow computed in the $\tilde{{x}}$-coordinate with initial velocity $(\partial\tilde{{x}}^{\mu}/\partial{ x}^{\nu})\xi^{\nu}$ (of course, one has to transform the connection to the $\tilde x$ coordinates too). Hence the exponential map $\phi_x$ is \emph{not} a section of $\FR{B}$, but the effect of a local diffeomorphism is merely a $GL$ rotation of the $\xi$'s. To summarize the above discussion, we have the
\begin{proposition}\label{prop_global_section}
The exponential map Eq.\ref{exponent_plain} gives a global section of the bundle $\FR{B}/\textrm{GL}$ over $M$, similarly Eq.\ref{exponent_ortho} gives a global section of $\FR{B}/\textrm{SO}$
\end{proposition}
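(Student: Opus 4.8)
The plan is to exhibit, for each $x\in M$, a canonical class in the quotient fibre $\FR{B}_x/K$ (with $K=\textrm{GL}$ or $\textrm{SO}$) and to check that it varies smoothly and globally with $x$. First I would confirm that $\phi_x$ of Eq.\ref{exponent_plain} is an admissible local isomorphism in the sense of Eq.\ref{local_iso}: its leading terms $\phi^\mu_x=x^\mu+\xi^\mu+\cdots$ give $\phi_x(0)=x$ and $\partial\phi_x/\partial\xi|_0=\textrm{Id}$, so $\phi_x$ is a local diffeomorphism and $\phi^{-1}_x$ is a genuine point of $\FR{B}_x$ carrying $x$ to the origin of $\mathbbm{R}$. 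Because we work with formal power series — equivalently with the inverse limit of the jet bundles $\FR{B}^k$ of sec.\ref{sec_BHC} — no convergence question arises and every identity may be checked order by order in $\xi$.

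Next I would isolate the ambiguity. The geodesic exponential map $\exp_x:T_xM\to M$ determined by the connection $\Gamma$ is coordinate-free; the only non-canonical datum in passing to $\phi_x:\BB{R}^n\to M$ is the identification of the flat coordinate $\xi$ with $T_xM$ through the coordinate basis $\partial_\mu$. I would argue that two such identifications differ by an element of $\textrm{GL}$, so that the class $[\phi^{-1}_x]\in\FR{B}_x/\textrm{GL}$ is unambiguous. This is precisely what the equivariance relation Eq.\ref{equiv} expresses: under a coordinate change $\tilde x^\mu=x^\mu-v^\mu$ with $v(x)=0$ one has $\phi^\mu_{\tilde x}(\tilde\xi)=\phi^\mu_x(\xi)$ where $\tilde\xi^\mu=\xi^\mu-\xi^\rho\nabla_\rho v^\mu$, and the essential point is that $\xi\mapsto\tilde\xi$ is \emph{linear}, hence lies in $\textrm{GL}$ (at $x$, where $v$ vanishes, $\nabla_\rho v^\mu=\partial_\rho v^\mu$ is simply the Jacobian). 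The excerpt verifies Eq.\ref{equiv} through order $\xi^3$; to reach all orders I would not match Taylor coefficients but instead invoke the manifest covariance of the geodesic equation $\ddot\gamma^\mu+\Gamma^\mu_{\nu\rho}\dot\gamma^\nu\dot\gamma^\rho=0$ under diffeomorphisms — the geodesic issuing from $x$ is a coordinate-free curve and its initial velocity transforms by the linear Jacobian. Since $\Gamma$ is globally defined and the flow depends smoothly on the base point, $x\mapsto[\phi^{-1}_x]$ is then a global section of $\FR{B}/\textrm{GL}$.

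For Eq.\ref{exponent_ortho} the argument is verbatim, the sole difference being that $\xi^a$ is now identified with $T_xM$ through a vielbein $e^\mu_a$ rather than a coordinate frame. Two admissible vielbeins are glued with structure group $SO(n)$, so they differ by $\xi^a\mapsto(U^{-1})^a{}_b\,\xi^b$ with $U\in SO(n)$; the residual ambiguity is therefore exactly $\textrm{SO}$, and the construction descends to a global section of $\FR{B}/\textrm{SO}$. For the full graded manifold one first applies the body exponential map and then extends along the fibres of the splitting Eq.\ref{splitting} by parallel transport with the chosen connections $A_\mu$; the same covariance argument applies, the linear frame ambiguity on the graded fibres being absorbed into the linear part of the structure group.

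The step I expect to be the crux is the passage from the order-$\xi^3$ check to all orders in Eq.\ref{equiv}. Matching coefficients directly becomes unwieldy, so the clean route is the structural one: realize $\exp_x$ as the time-one map of the geodesic spray — and, for Eq.\ref{exponent_ortho}, as the flow generated by the frame $e^\mu_a$ — whose coordinate-covariance and smooth dependence on initial data are built in. Once the linearity of $\xi\mapsto\tilde\xi$ is secured in this way, the remaining assertions (admissibility, descent to the quotient, and globality) follow immediately, and the explicit construction supplies the sections of $\FR{B}/\textrm{GL}$ and $\FR{B}/\textrm{SO}$ promised, even though $\textrm{GL}$ is not maximally compact so the abstract contractibility argument of the earlier remark does not directly apply.
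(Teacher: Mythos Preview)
Your proposal is correct and follows essentially the same route as the paper: use the equivariance Eq.\ref{equiv}, argue it persists to all orders because the geodesic (resp.\ frame-flow) exponential is the time-one map of a coordinate-covariant flow whose initial velocity transforms linearly, and conclude that the class of $\phi_x^{-1}$ in $\FR{B}/\textrm{GL}$ (resp.\ $\FR{B}/\textrm{SO}$) depends only on the point $x$. You have fleshed out admissibility and the SO and graded extensions more explicitly than the paper does, but the underlying argument is identical.
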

\begin{proof}Once the connection is chosen, Eq.\ref{equiv} shows that $\phi_x$ is (up to a $GL$ rotation) independent of the coordinate system, thus it only depends on the point $x$\qed
\end{proof}

For later use, consider $M$ with a symplectic structure $\Omega$, one can always pick a torsionless connection that preserves $\Omega$. Then one can demand that the exponential map preserves $\Omega$, the required map is
\bea &&\phi_{sp}^{\mu}={x}^{\mu}+\xi^{\mu}-\frac12\Gamma^{\mu}_{\ga\gb}\xi^{\ga}\xi^{\gb}+\Big\{-\frac16\partial_{\gc}\Gamma^{\mu}_{\ga\gb}+\frac13\Gamma^{\mu}_{\kappa\gc}\Gamma^{\kappa}_{\ga\gb}
-\frac{1}{24}\textsf{R}^{\mu}_{~\gc\ga\gb}\Big\} \xi^{\gc}\xi^{\ga}\xi^{\gb}+{\cal O}(\xi^4),\label{exponent_shifted}\\
&&\hspace{3.5cm}\textsf{R}^{\mu}_{~\gc\ga\gb}=(\Omega^{-1})^{\nu\mu}R_{\nu\gc~\gb}^{~~\lambda}\Omega_{\lambda\ga},\nn\eea
where the quantities $\textsf{R}$, $\Omega$ etc are evaluated at the point $\SL{x}$. It may be checked that the induced symplectic form in $\BB{R}^{2n}$ is $\Omega(\SL{x})$. The map $\phi_{sp}$ has the same equivariance property as Eq.\ref{equiv} and by picking a vielbein similarly as above, we easily show
\begin{proposition}\label{prop_global_section_sp}
$\phi_{sp}$ is a section of the bundle $\FR{B}/SP(n)$.
\end{proposition}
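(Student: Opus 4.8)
The plan is to mirror the proof of Proposition \ref{prop_global_section} line by line, substituting the symplectic structure for the orthogonal one. Two ingredients are needed: (i) the equivariance of $\phi_{sp}$ under coordinate changes, which pins the exponential map to the point $\SL{x}$ up to a \emph{linear} rotation of the flat coordinates; and (ii) the identification of that residual linear freedom with $SP(n)$ rather than the full $GL(2n)$.

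First I would establish the equivariance Eq.\ref{equiv} for $\phi_{sp}$. The argument recorded after Eq.\ref{equiv} transcribes unchanged: once the $\Omega$-compatible torsionless connection is fixed, the flow defining the exponential map is uniquely determined by its initial velocity $\xi^{\mu}$, so the flow read off in the $\SL{x}$-chart equals the flow read off in a diffeomorphic $\tilde{\SL{x}}$-chart with velocity $(\partial\tilde{\SL{x}}/\partial\SL{x})\xi$. This gives $\phi^{\mu}_{sp,\tilde{\SL{x}}}(\tilde\xi)=\phi^{\mu}_{sp,\SL{x}}(\xi)+{\cal O}(v^2)$ with $\tilde\xi^{\mu}=\xi^{\mu}-\xi^{\rho}\nabla_{\rho}v^{\mu}$, and already exhibits $\phi_{sp}$ as a section of $\FR{B}/GL$, exactly as in the first half of Proposition \ref{prop_global_section}.

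Next I would reduce the structure group from $GL(2n)$ to $SP(n)$. Because the connection preserves $\Omega$, the form induced on $\BB{R}^{2n}$ is the constant form $\Omega(\SL{x})$, as noted just before the statement. To make the flat coordinates canonical I therefore choose a \emph{symplectic} vielbein $e^{\mu}_a$ with $\Omega_{\mu\nu}e^{\mu}_a e^{\nu}_b=J_{ab}$, $J$ the standard symplectic matrix, in exact analogy with the orthonormal vielbein behind Eq.\ref{exponent_ortho}. Such frames exist as local sections and are glued by transition functions valued in $SP(n)$, since the symplectic frames at a point form a torsor under $SP(n)$. Feeding $e^{\mu}_a$ into the flow as the velocity field produces a map with the same equivariance, so the leftover ambiguity in the flat coordinates is now an $SP(n)$ rotation.

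The conclusion is then immediate: the $SP(n)$-equivalence class of $\phi^{-1}_{sp,\SL{x}}$ is independent of both the chart and the symplectic frame, so $\phi_{sp}$ descends to a well-defined global section of $\FR{B}/SP(n)$. The one point demanding genuine care --- and the step I expect to be the main obstacle --- is claim (ii): verifying that the induced form on the flat space is \emph{exactly} the constant $\Omega(\SL{x})$ to all orders in $\xi$, since this is what guarantees the residual freedom is no larger than $SP(n)$. This hinges on the $\Omega$-compatibility of the chosen connection together with the curvature correction $\textsf{R}^{\mu}_{~\gc\ga\gb}$ inserted in Eq.\ref{exponent_shifted}, whose very role is to cancel the non-symplectic part of the third-order term.
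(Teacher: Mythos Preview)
Your proposal is correct and follows the same route as the paper: the paper's argument is the one-line remark immediately preceding the proposition, namely that $\phi_{sp}$ enjoys the equivariance property Eq.\ref{equiv} and that ``picking a vielbein similarly as above'' (i.e.\ a symplectic frame in place of the orthonormal one) reduces the residual freedom to $SP(n)$. Your write-up simply unpacks these two points, and your flagged concern about the induced form being exactly $\Omega(\SL{x})$ is precisely what the paper asserts without proof in the sentence ``It may be checked that the induced symplectic form in $\BB{R}^{2n}$ is $\Omega(\SL{x})$.''
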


Next we bring in the rest of the graded structure, as a matter of notation, let the underlined indices $\udl A$ denote all coordinates other than that of the body $x^{\mu}$, and we use the typewriter font to denote the combination
\bea {\tt x}^{\udl A}={x}^{\udl A}+\xi^{\udl A}.\label{notation}\eea
The exponential map for the body is as Eq.\ref{exponent_plain}, while for the graded sector, it is
\bea
\phi^{\udl A}&=&{\tt x}^{\udl A}-\xi^{\ga}A^{\udl A}_{\ga \udl B}{\tt x}^{\udl B}+\frac12\xi^{\ga}\xi^{\gb}\Big(A_{\ga}A_{\gb}+\Gamma^{\rho}_{\ga\gb}A_{\rho}-\partial_{\ga}A_{\gb}\Big)^{\udl A}_{~\udl B}{\tt x}^{\udl B}
+\frac{1}{6}\xi^{\ga}\xi^{\gb}\xi^{\gc}\Big(\partial_{\gc}\big(A_{\ga}A_{\gb}+A_{\rho}\Gamma^{\rho}_{\ga\gb}-\partial_{\ga}A_{\gb}\big)\nn\\
&&-\Gamma^{\rho}_{\ga\gb}\big(A_{(\rho}A_{\gc)}+2A_{\sigma}\Gamma^{\sigma}_{\rho\gc}-\partial_{(\rho}A_{\gc)}\big)
-\big(A_{\ga}A_{\gb}+A_{\rho}\Gamma^{\rho}_{\ga\gb}-\partial_{\ga}A_{\gb}\big)A_{\gc}\Big)^{\udl A}_{~\udl{B}}{\tt x}^{\udl{B}}+{\cal O}((\xi^{\mu})^4),\label{exponent_graded}\eea
Notice that since the connection $A$ acts within each $E_i$, we always have $\deg\, A^A_{\mu\,B}=0$.

The Gronthendieck connection defined in Eq.\ref{Gro_conn} is given by the formula
\bea \FR{G}=\left[\FR{G}^{\mu},~~\FR{G}^{\udl A}\right]=\left[dx^{\nu},~~ dx^{\udl B}\right]\left[\begin{array}{cc}
         \frac{\partial\phi^{\rho}}{\partial x^{\nu}} & \frac{\partial\phi^{\udl C}}{\partial x^{\nu}} \\
         \frac{\partial\phi^{\rho}}{\partial x^{\udl B}} & \frac{\partial\phi^{\udl C}}{\partial x^{\udl B}}
       \end{array}\right]
\left[\begin{array}{cc}
         \frac{\partial\phi^{\mu}}{\partial\xi^{\rho}} & \frac{\partial\phi^{\udl A}}{\partial \xi^{\rho}} \\
         \frac{\partial\phi^{\mu}}{\partial\xi^{\udl C}} & \frac{\partial\phi^{\udl A}}{\partial \xi^{\udl C}}
       \end{array}\right]^{-1},\nn\eea
which is calculated to be
\bea &&\mathfrak{G}=dx^{\gc}\big(\delta^{\mu}_{\gc}+\Gamma^{\mu}_{\gb\gc}\xi^{\gb}-\frac13R_{\gc\ga~\gb}^{~~\;\mu}\xi^{\ga}\xi^{\gb}\big)\frac{\partial}{\partial\xi^{\mu}}\nn\\
&&\hspace{2.5cm}+dx^{\udl A}\frac{\partial}{\partial {\xi}^{\udl A}}+dx^{\gc}{\tt x}^{\udl C}\big(A_{\gc}+\frac12\xi^{\gb}F_{\gb\gc}-\frac16\xi^{\ga}\xi^{\gb}\BS{\nabla}_{\ga}F_{\gc\gb}\big)^{\udl A}_{~\udl C}\frac{\partial}{\partial {\xi}^{\udl A}}+{\cal O}(\xi^3),\label{summarize}\eea
where $\BS{\nabla}$ is the covariant derivative with connection $\Gamma+A$ and $F_{\ga\gb}$ is the curvature for $A$. If one uses the map Eq.\ref{exponent_shifted}, then the first line of $\FR{G}$ becomes
\bea \FR{G}_{sp}=dx^{\gc}\big(\delta_{\gc}^{\mu}+\Gamma^{\mu}_{\gb\gc}\xi^{\gb}-\big(\frac18{\SF R}_{\gc~{\ga\gb}}^{~{\mu}}
+\frac14{\SF R}_{\gc\ga~\gb}^{~~\,{\mu}}\big)\xi^{\ga}\xi^{\gb}\big)\frac{\partial}{\partial\xi^{\mu}}+\cdots,\label{Gro_conn_shifted}\eea
in particular $\FR{G}$ has a Hamiltonian lift since it is $\FR{sp}$-valued.

As an example, we take $\M=T[1]M$ with body coordinates $x^{\mu}$ of deg 0 and $\deg~1$ coordinates $v^{\mu}$, as well as their flat space counterparts $\xi^{\mu},\;\nu^{\mu}$, and ${\tt v}=v+\nu$ as above. For the homological vector field $Q=v^{\mu}\partial_{\mu}$, we can compute\footnote{As a note on computation, it is much easier to compute $\widehat Q$ directly than to compute $\phi^{-1}_*Q$ and $\iota_{Q}\FR{G}$ separately.}
\bea {\widehat Q}=\frac 12{\tt v}^{\mu}\xi^{\ga}\xi^{\gb}R_{\mu\ga~\gb}^{~~\;\rho}\frac{\partial}{\partial \xi^{\mu}}+{\tt v}^{\mu}{\tt v}^{\kappa}\Big(\xi^{\ga}R_{\mu\ga~\kappa}^{~~\;\lambda}+\frac12\xi^{\ga}\xi^{\gb}\nabla_{\ga}R_{\mu\gb~\kappa}^{~~\;\lambda}\Big)\frac{\partial}{\partial {\nu}^{\lambda}}+{\cal O}(\xi^3).\nn\eea
We will attach some bundle structures to get a representation of $Q$ in sec.\ref{sec_WfGViQc}.

\subsection{Connection Dependence}\label{sec_CD}
In obtaining the expression Eq.\ref{summarize}, we have made some non-canonical choices, such as the splitting in Eq.\ref{splitting} as well as the choices of $\Gamma^{\mu}_{\nu\rho}$, $A_{\mu}$. The resulting change of the Gronthendieck connection is predicted by the general formula Eq.\ref{gauge_trans_gen}, since all of the choices above merely serve to define the local isomorphism Eq.\ref{local_iso}. However, we would like to check Eq.\ref{gauge_trans_gen} explicitly, which also serves to strengthen the credibility of Eq.\ref{summarize}.

Under a change of the connection for $TM$, we write $\delta\Gamma^{\mu}_{\ga\gb}=\gamma^{\mu}_{\ga\gb}$, and we continue to assume $\gamma^{\mu}_{[\ga\gb]}=0$. Let
\bea\Psi=\Big(\frac{1}{2}\gamma^{\mu}_{\ga\gb}\xi^{\ga}\xi^{\gb}+\frac16(\nabla_{\ga}\gamma^{\mu}_{\gb\gc})\xi^{\ga}\xi^{\gb}\xi^{\gc}
+\frac{1}{12}\gamma^{\mu}_{\ga\gb}F_{\mu\gc}\xi^{\ga}\xi^{\gb}\xi^{\gc}\Big)\frac{\partial}{\partial\xi^{\mu}},\nn\eea
and we can check that
\bea[-d+\FR{G},\Psi]=\delta_{\Gamma}\FR{G},\nn\eea
which is the infinitesimal version of Eq.\ref{gauge_trans_gen}. The sign discrepancy has been explained earlier.

Likewise we can vary $A$, $\delta A_{\mu}=\vara_{\mu}$, and let
\bea \Psi=\vara_{\ga}\xi^{\ga}+\frac12(\BS{\nabla}_{\ga}\vara_{\gb})\xi^{\ga}\xi^{\gb}+\frac16(\BS{\nabla}_{\ga}\BS{\nabla}_{\gb}\vara_{\gc})\xi^{\ga}\xi^{\gb}\xi^{\gc},\nn\eea
it may be checked that
\bea [-d+\FR{G},\Psi]=\delta_{A}\FR{G}.\nn\eea

To check Eq.\ref{change_hat_u}, we use the geodesic exponential map Eq.\ref{exponent_plain} as an example, and compute the push-forward of a vector field
\bea (\phi^{-1}_{\SL{x}})_*u=\Big(u^{\mu}+\xi^{\ga}\nabla_{\ga}u^{\mu}+\xi^{\ga}\xi^{\gb}\big(\frac16u^{\rho}R_{\rho\ga~\gb}^{~~\,\mu}+\frac12\nabla_{\ga}\nabla_{\gb}u^{\mu}\big)\Big)
\frac{\partial}{\partial \xi^{\mu}}+{\cal O}(\xi^3).\nn\eea
Thus
\bea \hat u^{\mu}=\xi^{\ga}\partial_{\ga}u^{\mu}+\frac12\xi^{\ga}\xi^{\gb}\big(u^{\rho}R_{\rho\ga~\gb}^{~~\,\mu}+\nabla_{\ga}\nabla_{\gb}u^{\mu}\big),\label{hat_u}\eea
and if one changes the connection $\delta\Gamma^{\mu}_{\nu\rho}=\gamma^{\mu}_{\nu\rho}$
\bea \delta_{\Gamma}\hat u^{\mu}=\frac12\xi^{\ga}\xi^{\gb}\big(2\gamma^{\mu}_{\ga\gc}\nabla_{\gb}u^{\gc}-\gamma^{\gc}_{\ga\gb}\nabla_{\gc}u^{\mu}+u^{\gc}  \nabla_{\gc}\gamma^{\mu}_{\ga\gb}\big)=u\circ \Psi^{\mu}+[\hat u,\Psi]^{\mu}.\nn\eea
This is the verification of the assertion Eq.\ref{change_hat_u} in the case when the change of local isomorphism is caused by a change of the connection.

A change of the splitting Eq.\ref{splitting} corresponds to a shift
\bea  x^{\udl A}=x^{\udl A}+\epsilon^{\udl A}_{~\udl B} x^{\udl B},\nn\eea
the verification of Eq.\ref{equiv} is almost trivial given that the exponential map depends linearly on the combination ${\tt x}^{\udl A}={x}^{\udl A}+\xi^{\udl A}$.

\begin{remark}
We notice that in the expression Eq.\ref{hat_u} for $\hat u$, only the first term is non-covariant, and it takes value in the lie algebra $\FR{gl}$ (or $\FR{so}$ and $\FR{sp}$ if one uses the exponential map Eq.\ref{exponent_ortho} and Eq.\ref{exponent_shifted}). Thus if the CE cochain into which we plug $\hat u$ is basic w.r.t $GL$ (resp. $SO$, $SP$), the result is manifestly covariant. This is a more elementary way of understanding the necessitiy of basic-ness promised on page.\pageref{one_remark}.
\end{remark}

So far, we have checked explicitly all claims made in sec.\ref{sec_BHC} and \ref{sec_LAoVF}.

\section{Graph Complex}\label{sec_graph}
A cochain in the CE complex of the Lie algebra of formal vector fields or Hamiltonian functions can be presented as a graph, and the cohomology of the CE complex is subsequently computed as the cohomology of the graph complex. Unfortunately, the CE cohomology of vector fields are quite banal, see ref.\cite{Fuks}, while the same cohomology for the Hamiltonian functions is much more interesting, so is its associated graph complex.

As such we will be mostly studying the Lie algebra of Hamiltonian functions, which has a bracket of degree $\nb=0,-2,\cdots$, and the associated graph complex
is what we call the \emph{plain} graph complex. The key is the correspondence and its extension
\bea &&\hspace{1.9cm}\textrm{Chevalley-Eilenberg complex}~\mathop{\rightleftharpoons}_{\beta^{\dagger}}^{\beta}~\textrm{Graph complex},\\
&&\textrm{CE complex valued in cyclic bar complex}~\mathop{\rightleftharpoons}_{\beta^{\dagger}}^{\beta}~\textrm{Extended graph complex}.\label{CE_Gph_correspondence}\eea
Thus one can construct CE \emph{co}cycles from graph \emph{co}cycles and conversely construct graph cycles from CE cycles.
For more details, see the lecture notes \cite{BV_lecture}.

A graph is a finite 1-dimensional CW complex, in simpler words, it is a collection of vertices (0-cells) connected by edges (1-cells). The vertices are labelled  $1,2,...$ and each edge is oriented. The graph complex is the formal linear combination of graphs mod the relation: flipping the orientation of an edge or exchanging the labelling of two vertices flips the sign of the graph.

For weight systems of knots, we will also need the extended graph complex, which includes an extra oriented circle in addition to the above ingredients. Vertices can be placed on the circle (the peripheral ones) or away from the circle (internal ones); these two types of vertices are labelled separately. The edges can now also run between the two types of vertices. One can still flip the orientation of an edge at the cost of a minus sign. The peripheral vertices can also be cyclically permuted with the appropriate sign.

\begin{definition}
A particular type of the extended graph complex is the so called \emph{chord diagrams}, consisting of graphs that have tri-valent internal vertices and uni-valent peripheral vertices.
\end{definition}
These diagrams naturally appear in the finite type Vassiliev knot invariants \cite{Vassiliev}, and their weights is what we aim to construct eventually.

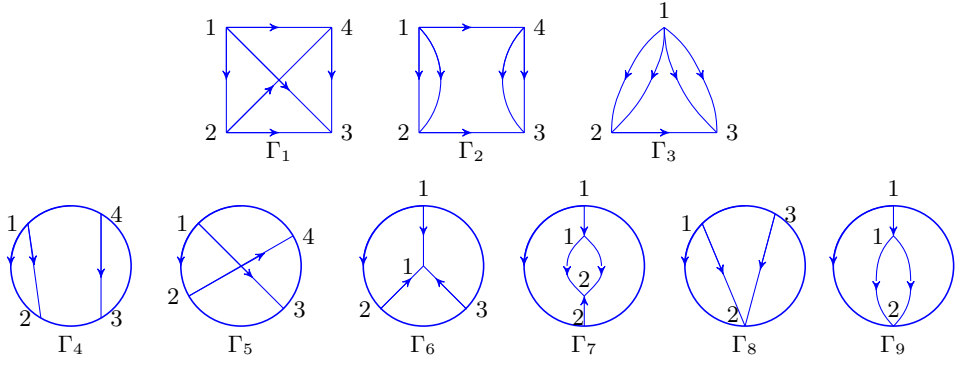
\begin{figure}[h]
\begin{center}
\begin{tikzpicture}[scale=.7]
\draw [-,blue] (0,0cm) -- (0,2cm)
      [-,blue] (0,0cm) -- (2,0cm)
      [-,blue] (2,0cm) -- (2,2cm)
      [-,blue] (0,2cm) -- (2,2cm);
\draw [->,blue] (0,2cm) -- (0,1cm);
\draw [->,blue] (2,2cm) -- (2,1cm);
\draw [->,blue] (0,0cm) -- (1,0cm);
\draw [->,blue] (0,2cm) -- (1,2cm);
\draw [-,blue](0,0cm)--(2,2cm);
\draw [->,blue](0,0cm)--(.9,.9cm);
\draw [-,blue](0,2cm)--(2,0cm);
\draw [->,blue](0,2cm)--(1.2,.8cm);
\node (p) at (0,0) [left] {\small $2$};
\node (q) at (2,0) [right] {\small $3$};
\node (s) at (0,2) [left] {\small $1$};
\node (r) at (2,2) [right] {\small $4$};
\node (g) at (1,0) [below] {\small $\Gamma_1$};
\end{tikzpicture}~~
\begin{tikzpicture}[scale=.7]
\draw [-,blue] (0,0cm) -- (0,2cm)
      [-,blue] (0,0cm) -- (2,0cm)
      [-,blue] (2,0cm) -- (2,2cm)
      [-,blue] (0,2cm) -- (2,2cm);
\draw [->,blue] (0,2cm) -- (0,1cm);
\draw [->,blue] (2,2cm) -- (2,1cm);
\draw [->,blue] (0,0cm) -- (1,0cm);
\draw [->,blue] (0,2cm) -- (1,2cm);
\draw [-,blue](0,0cm) arc (-45:45:1.414cm);
\draw [-,blue](2,2cm) arc (135:225:1.414cm);
\draw [->,blue](0,2cm) arc (45:0:1.414cm);
\draw [->,blue](2,2cm) arc (135:180:1.414cm);
\node (p) at (0,0) [left] {\small $2$};
\node (q) at (2,0) [right] {\small $3$};
\node (s) at (0,2) [left] {\small $1$};
\node (r) at (2,2) [right] {\small $4$};
\node (g) at (1,0) [below] {\small $\Gamma_2$};
\end{tikzpicture}~~
\begin{tikzpicture}[scale=.7]
\draw [-,blue] (0+3.5,0cm) -- (2+3.5,0cm);
\draw [->,blue] (0+3.5,0cm) -- (1.1+3.5,0cm);
\draw [->,blue] (1+3.5,2) to [out=-88,in=65] (.75+3.5,1);
\draw [-,blue] (.75+3.5,1) to [out=-115,in=45] (3.5,0);
\draw [->,blue] (1+3.5,2) to [out=-90-26.6*2,in=65] (.25+3.5,1);
\draw [-,blue] (.25+3.5,1) to [out=-115,in=90] (3.5,0);
\draw [->,blue] (1+3.5,2) to [out=-88,in=115] (1.25+3.5,1);
\draw [-,blue] (1.25+3.5,1) to [out=-65,in=135] (2+3.5,0);
\draw [->,blue] (1+3.5,2) to [out=-90+26.6*2,in=180-65] (1.75+3.5,1);
\draw [-,blue] (1.75+3.5,1) to [out=-65,in=90] (2+3.5,0);
\node (p1) at (3.5,0) [left] {\small $2$};
\node (q1) at (2+3.5,0) [right] {\small $3$};
\node (r1) at (1+3.5,2) [above] {\small $1$};
\node (g) at (1+3.5,0) [below] {\small $\Gamma_3$};
\end{tikzpicture}~~\\
\begin{tikzpicture}[scale=.8]
\draw [semithick,blue] (0,0) circle (1);
\draw [->,blue] (0,1) arc (90:180:1cm);
\draw [-,blue](-.717,.717cm)--(-.5,-.866cm);
\draw [->,blue](-.717,.717cm)--(-.717/2-.5/2,.717/2-.866/2cm);
\draw [-,blue](.5,.866cm)--(.5,-.866cm);
\draw [->,blue](.5,.866cm)--(.5,-.2cm);
\node (p) at (-.717,.717cm) [left] {\small $1$};
\node (q) at (-.5,-.866cm) [left] {\small $2$};
\node (s) at (.5,.866cm) [right] {\small $4$};
\node (r) at (.5,-.866cm) [right] {\small $3$};
\node (g) at (0,-1) [below] {\small $\Gamma_4$};
\end{tikzpicture}~~
\begin{tikzpicture}[scale=.8]
\draw [semithick,blue] (0,0) circle (1);
\draw [->,blue] (0,1) arc (90:180:1cm);
\draw [-,blue](-.717,.717cm)--(.717,-.717cm);
\draw [->,blue](-.717,.717cm)--(.2,-.2cm);
\draw [-,blue](-.866,-.5cm)--(.866,.5cm);
\draw [->,blue](-.866,-.5cm)--(0.4,0.217cm);
\node (p) at (-.717,.717cm) [left] {\small $1$};
\node (q) at (-.866,-.5cm) [left] {\small $2$};
\node (r) at (.717,-.717cm) [right] {\small $3$};
\node (s) at (.866,.5cm) [right] {\small $4$};
\node (g) at (0,-1) [below] {\small $\Gamma_5$};
\end{tikzpicture}~~
\begin{tikzpicture}[scale=.8]
\draw [semithick,blue] (0,0) circle (1);
\draw [->,blue] (0,1) arc (90:180:1cm);
\draw [-,blue](-.717,-.717cm)--(0,0cm);
\draw [->,blue](-.717,-.717cm)--(-.2,-.2cm);
\draw [-,blue](.717,-.717cm)--(0,0cm);
\draw [->,blue](.717,-.717cm)--(.2,-.2cm);
\draw [-,blue](0,1cm)--(0,0cm);
\draw [->,blue](0,1cm)--(0,0.5cm);
\node (p) at (0,0cm) [left] {\small $1$};
\node (q) at (-.717,-.717cm) [left] {\small $2$};
\node (r) at (.717,-.717cm) [right] {\small $3$};
\node (s) at (0,1cm) [above] {\small $1$};
\node (g) at (0,-1) [below] {\small $\Gamma_6$};
\end{tikzpicture}~~
\begin{tikzpicture}[scale=.8]
\draw [semithick,blue] (0,0) circle (1);
\draw [->,blue] (0,1) arc (90:180:1cm);
\draw [->,blue](0,-1cm)--(0,-.5cm);
\draw [->,blue](0,1cm)--(0,.5cm);
\draw [-<,blue](0,-.5cm) to [out=135,in=-90] (-2/7,0cm);
\draw [-,blue](-2/7,0cm) to [out=90,in=-135] (0,0.5cm);
\draw [-<,blue](0,-.5cm) to [out=45,in=-90] (2/7,0cm);
\draw [-,blue](2/7,0cm) to [out=90,in=-45] (0,0.5cm);
\node (p) at (0,0.5cm) [left] {\small $1$};
\node (q) at (0,-.5cm) [above] {\small $2$};
\node (r) at (0,1cm) [above] {\small $1$};
\node (s) at (-.1,-.85cm) [ ] {\small $2$};
\node (g) at (0,-1cm) [below] {\small $\Gamma_7$};
\end{tikzpicture}~~
\begin{tikzpicture}[scale=.8]
\draw [semithick,blue] (0,0) circle (1);
\draw [->,blue] (0,1) arc (90:180:1cm);
\draw [-,blue](-.717,.717cm)--(0,-1);
\draw [->,blue](-.717,.717cm)--(-0.717/2,0.717/2-1/2);
\draw [-,blue](.5,.866cm)--(0,-1);
\draw [->,blue](.5,.866cm)--(0.25,0.866/2-0.5);
\node (p) at (-.717,.717cm) [left] {\small $1$};
\node (q) at (-.2,-.8cm) [ ] {\small $2$};
\node (r) at (.5,.866cm) [right] {\small $3$};
\node (g) at (0,-1) [below] {\small $\Gamma_8$};
\end{tikzpicture}~~
\begin{tikzpicture}[scale=.8]
\draw [semithick,blue] (0,0) circle (1);
\draw [->,blue] (0,1) arc (90:180:1cm);
\draw [->,blue](0,1cm)--(0,.5cm);
\draw [-<,blue](0,-1cm) to [out=135,in=-90] (-2/7,-.25cm);
\draw [-,blue](-2/7,-.25cm) to [out=90,in=-135] (0,0.5cm);
\draw [-<,blue](0,-1cm) to [out=45,in=-90] (2/7,-.25cm);
\draw [-,blue](2/7,-.25cm) to [out=90,in=-45] (0,0.5cm);
\node (p) at (0,0.5cm) [left] {\small $1$};
\node (r) at (0,1cm) [above] {\small $1$};
\node (s) at (0,-1cm) [above ] {\small $2$};
\node (g) at (0,-1cm) [below] {\small $\Gamma_9$};
\end{tikzpicture}
\caption{Examples of graphs, the $|\textrm{Aut}\,\Gamma|$ factor for them are: 24, 4, 2 and 2, 4, 3, 2, 1, 1.} \label{ex_gph_fig}
\end{center}
\end{figure}
The graph differential acts through $1.\,$shrinking an edge running from internal vertices $i$ to $j$ (assuming $i<j$), naming the new vertex $i$ and decreasing the labels of the vertices after $j$ by one;
$2.\,$collapsing two adjacent peripheral vertices together and decreasing the labels of the ensuing peripheral vertices by one; $3.\,$shrinking an edge connecting one internal vertex and one peripheral vertex, and decreasing the labels of the ensuing internal vertices by one; see fig.\ref{dec_graph_diff_fig}. All three operations carry a sign factor given in fig.\ref{sign_fac_fig}.

Thus for the graphs of fig.\ref{ex_gph_fig},
\bea&&\hspace{2.4cm} \partial\Gamma_1=6\Gamma_3,~~~\partial\Gamma_2=2\Gamma_3;\nn\\
&&\partial\Gamma_4=-2\Gamma_8,~~~\partial\Gamma_5=4\Gamma_8,~~~\partial\Gamma_6=-3\Gamma_8+3\Gamma_9,~~~\partial\Gamma_7=2\Gamma_9.\label{gph_diff_ex}\eea

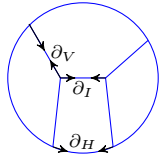
\begin{figure}[h]
\begin{center}
\begin{tikzpicture}[scale=1]
\draw [blue] (0,0) circle (1);

\draw [-,blue] (-.717,.717)--(-.3,0)
[-,blue] (-.3,.0)--(.3,0)
[-,blue] (.3,0)--(.866,.5)
[-,blue] (-.3,0)--(-.4,-.9165)
[-,blue] (.3,0)--(.4,-.9165);

\draw [->](-.3,.0)--(-.1,0);
\draw [->](.3,.0)--(.1,0);
\draw [->](-.3,0)--(-.439,.239); 
\draw [->](-.717,.717)--(-.508,.358);
\draw [->](-.4,-.9165)--(-.2,-.98); 
\draw [->](.4,-.9165)--(.2,-.98);

\node (a) at (0,-.85) {\scriptsize{$\partial_{H}$}};
\node (b) at (0,-.15) {\scriptsize{$\partial_{I}$}};
\node (b) at (-.25,.33) {\scriptsize{$\partial_{V}$}};
\end{tikzpicture}
\caption{Differential of a graph}\label{dec_graph_diff_fig}
\end{center}
\end{figure}

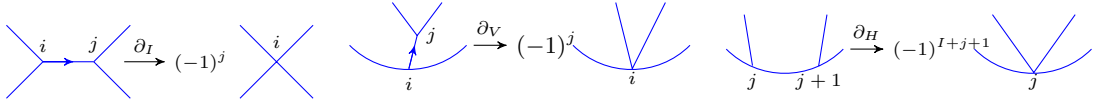
\begin{figure}[h]
\begin{center}
\begin{tikzpicture}[scale=1.35]
\draw [-,blue] (1,1) -- (1.5,1)
[->,blue] (1,1) -- (1.3,1);
\draw [-,blue] (.64,1.36)--(1,1) node [above] {{\color{black}\scriptsize $i$}};
\draw [-,blue] (1,1) -- (.64,0.64);
\draw [-,blue] (1.86,1.36)--(1.5,1) node [above] {{\color{black}\scriptsize $j$}};
\draw [-,blue] (1.5,1) -- (1.86,0.64);
\draw [->] (1.5+.3,1)--(1.5+.7,1) node[right] {\scriptsize{$(-1)^{j}$}};
\node  at (1.5+.5,1.15){\scriptsize${\partial_I}$};
\draw [-,blue] (.64+2.3,1.36)--(1.36+2.3,.64)
(.64+2.3,.64)--(1.36+2.3,1.36);
\node at (1+2.3,1.2){\scriptsize $i$};
\end{tikzpicture}~~~
\begin{tikzpicture}[scale=1.1]
\draw [-,blue] (-.717,-.717) arc (225:315:1)
 (.1,-.6)--(0,-1) node[below] {{\color{black}\scriptsize $i$}}
 (.1,-.6)--(.4,-.2)
 (-.2,-.2)--(.1,-.6) node[right] {{\color{black}\scriptsize $j$}};
\draw [->,blue] (0,-1)--(.07,-.72) ;
\draw [->] (-.717+1.5,-.717)--(-.717+1.9,-.717) node[right] {\small{$(-1)^{j}$}};
\node  at (-.717+1.7,-.517){\scriptsize${\partial_V}$};
\draw [-,blue] (-.717+2.7,-.717) arc (225:315:1)
 (-.2+2.7,-.2)--(0+2.7,-1)
 (.4+2.7,-.2)--(0+2.7,-1);
\node (l) at (0+2.7,-1.1165) {\scriptsize{$i$}};
\end{tikzpicture}~~~
\begin{tikzpicture}[scale=1.1]
\draw [-,blue] (-.717,-.717) arc (225:315:1)
 (-.5,-.3)--(-.4,-.9165)
 (.5,-.3)--(.4,-.9165);
\node (j) at (-.4,-1.1165) {\scriptsize{$j$}};
\node (k) at (.4,-1.1165) {\scriptsize{$j+1$}};
\draw [->] (-.717+1.5,-.717)--(-.717+1.9,-.717) node[right] {\scriptsize{$(-1)^{I+j+1}$}};
\node  at (-.717+1.7,-.517){\scriptsize${\partial_H}$};
\draw [-,blue] (-.717+3,-.717) arc (225:315:1)
 (-.5+3,-.3)--(0+3,-1)
 (.5+3,-.3)--(0+3,-1);
\node (l) at (0+3,-1.1165) {\scriptsize{$j$}};
\end{tikzpicture}
\caption{Sign factor for the differential, here $I$ is the number of
internal vertices. In the third picture, if $j$ is the
$l^{th}$-the last-vertex on the circle, then we rename the new vertex 1 with
sign factor $(-1)^{I+l+1}$. The overall minus sign from that of ref.\cite{WilsonLoop} is due to the difference in labelling schemes.}\label{sign_fac_fig}
\end{center}
\end{figure}

\subsection{Recipe for the CE-Graph Correspondence}\label{sec_RftCGC}
We shall only give the recipe, the proof may be found in refs.\cite{ConantVogt,Hamilton,WilsonLoop}, in particular, see ref.\cite{BV_lecture}, where the graph complex was presented as a polynomial and the proof was written in a few strokes.

 Let $\mathbbm{R}$ be a non-negatively graded vector space with constant symplectic form $\Omega$ of degree $\nb$, and $\BB{V}$ is a fixed graded vector space. In the following $f$ are formal polynomials on $\mathbbm{R}$ vanishing at the origin, while $g$ are $\textrm{Mat}(\BB{V})$-valued formal polynomials on $\mathbbm{R}$. But for now, one can consider the $g$'s entry by entry and treat them on the same footing as $f$.
Given an element in the extended CE chain $c_{p,q}$, written as
\bea (f_1,f_2,\cdots,f_p)\otimes [g_1|g_2|\cdots|g_q],\label{ECE_chain}\eea
write the formal product
\bea F=\big(t_1f_1(\xi_1)\big)\cdots\big(t_pf_p(\xi_p)\big)\big(s_1g_1(\eta_1)\big)\cdots\big(s_qg_q(\eta_q)\big),\nn\eea
where $t_i,~i=1,\cdots, p$ is of degree\footnote{The entire discussion about CE-graph correspondence is valid only for $\nb$-even, but we have retained $\nb$ here for the coherence of notation.} $\nb+1$ and $s_i,~i=1,\cdots, q$ is of degree 1 (which explains the suspension operation, see first paragraph of sec.\ref{sec_CECfGLA}).
Furthermore, both $\xi_i$ $\eta_i$ are coordinates of $\mathbbm{R}$, but given different names to distinguish the vertices.

Let $[\Gamma_{p,q}]$ be a representative of an equivalence class of graphs with $p$ internal and $q$ peripheral vertices. Let $E_{ij;}$ be an edge from internal vertices $i$ to $j$, $E_{i;j}$ be an edge from internal vertex $i$ to peripheral vertex $j$ and finally $E_{;ij}$ an edge from peripheral vertex $i$ to $j$. With each type of vertices we associate a differential
\bea E_{ij;}\to \beta_{ij;}=\big(\Omega^{-1}\big)^{AB}\frac{\partial}{\partial \xi_{i}^A}\frac{\partial}{\partial \xi_{j}^B},~~
E_{i;j}\to \beta_{i;j}=\big(\Omega^{-1}\big)^{AB}\frac{\partial}{\partial \xi_{i}^A}\frac{\partial}{\partial \eta_{j}^B},~~
E_{;ij}\to \beta_{;ij}=\big(\Omega^{-1}\big)^{AB}\frac{\partial}{\partial \eta_{i}^A}\frac{\partial}{\partial \eta_{j}^B}.\nn\eea
The fact that $\beta_{\cdot;\cdot}$ is anti-symmetric under exchange of $i,j$ (this requires $\nb$ be even) reflects the equivalence relation concerning the flipping of an edge in the previous section.

Form the product of operators
\bea \tilde\beta_{\Gamma}=\frac{1}{|\textrm{Aut}\,\Gamma|}\partial_{s_q}\cdots \partial_{s_1}\partial_{t_p}\cdots \partial_{t_1}~\prod_{E_{i;j}}\beta_{{i;j}}~\prod_{E_{;ij}}\beta_{{;ij}}~\prod_{E_{ij;}}\beta_{{ij;}}\nn\eea
where $|\textrm{Aut}\,\Gamma|$ is the order of automorphism group of the vertices\footnote{Usually, when one computes Feynman diagrams one also includes a factor of $1/p!$ for $p$ edges running between the same pair of vertices. In fact, this factor is implicitly included in the current formalism when one applies the operators $\beta_{\cdot;\cdot}$} of $\Gamma$.

The graph chain associated with the CE chain Eq.\ref{ECE_chain} is the formal sum over equivalence classes of graphs with $p$ internal and $q$ peripheral vertices
\bea (f_1,f_2,\cdots,f_p)\otimes [g_1|g_2|\cdots|g_q]\Rightarrow \sum_{[\Gamma_{p,q}]}~\big(\tilde\beta_{\Gamma_{p,q}}F\big)[\Gamma_{p,q}]\Big|_{t=s=\xi=\eta=0}+\textrm{perm},\label{recipe}\eea
where perm denotes the permutations among $f$'s and cyclic permutations among $g$'s with the sign factor as in Eq.\ref{sym_prop} and Eq.\ref{cyc_sym_prop}.

\emph{We shall denote collectively all the above operations as $\beta$}, forming one way of the correspondence Eq.\ref{CE_Gph_correspondence}. Trivially, by restricting $q=0$, all the previous assignments yield a map with values in the non-extended graph complex.
\begin{remark}We also observe that the definition of $\deg c^{\sbullet,\sbullet}$ in Eq.\ref{deg_cochain} is nothing but the (mod 2) degree of the operator $\beta$.\end{remark}

Here is the first main theorem of the paper concerning weight-systems for knot invariants.
Let $(\M,\Omega)$ be an even degree symplectic $NQ$-manifold, with an $\Omega$ preserving homological vector field $Q$, and $\E$ a graded vector bundle over $\M$ with a lift $Q^{\uparrow}$. Pick an exponential map which is a global section of $\FR{B}/SP(\mathbbm{R})$ (e.g. Eq.\ref{exponent_shifted}), construct the quantities ${\widehat Q}$ and ${\widehat T}$ as in Eqs.\ref{def_hat}, \ref{hat_T}. Since $Q$ preserves $\Omega$ and the local model $\mathbbm{R}$ inherits a symplectic structure from $\M$, the vector field $\widehat Q$ will have a Hamiltonian lift $\Theta$. We form the extended chain
\bea c=\sum_{p+q=m}c_{p,q},~~~c_{p,q}=\frac{1}{p!q}(\underbrace{\Theta_3,\cdots,\Theta_3}_p)\otimes \Tr[\underbrace{{\widehat T}_1|\cdots|{\widehat T}_1}_q],\nn\eea
where the subscripts 3 and 1 denote the cubic and linear term in the expansion of $\Theta$ and ${\widehat T}$ in terms of $\xi$. We have

\begin{theorem}\label{main_A} The graph chain $\beta c$ by applying the above recipe to $c$ is closed with values in the ring $H_Q(\M)$. And the graph cohomology class is independent of the choice of the connection in defining the exponential map, as well as the trivialization of $\E$. Thus we have a well-defined weight-system for chord diagrams valued in $H_Q(\M)$.
\end{theorem}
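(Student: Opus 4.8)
The plan is to exploit that the recipe $\beta$ of Eq.\ref{recipe} is a morphism of complexes intertwining the extended CE differential with the graph differential of sec.\ref{sec_graph}, so that $\partial(\beta c)=\pm\,\beta(\partial_{\mathrm{CE}}c)$. Rather than attacking $\partial(\beta c)=0$ head-on, I would therefore establish that $c$ is a cycle in the extended CE complex modulo terms that $\beta$ sends to $Q$-exact graph chains, and then invoke this naturality. The two inputs to the recipe are the cubic Hamiltonian coefficient $\Theta_3$, playing the role of the $f$'s (the $L_\infty$ bracket, i.e. the trivalent vertex), and the linear coefficient $\widehat T_1$, playing the role of the $g$'s (the action on the Wilson line); the combinatorial weights $1/(p!\,q)$ are the usual generating-function normalisation that makes the contributions from neighbouring bidegrees $(p,q)$ telescope.

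First I would unpack the extended CE differential on $c_{p,q}$ into its three constituents and match them against the three graph operations $\partial_I,\partial_V,\partial_H$ of fig.\ref{dec_graph_diff_fig}: the internal bracket $\partial_I$ produces the self-bracket of $\Theta_3$, the $\FR{g}$-module action of Eq.\ref{g_action_B} produces $\widehat Q$ acting on $\widehat T_1$, and the cyclic-bar differential $\partial_H$ produces the product $\widehat T_1\cdot\widehat T_1$. These are governed precisely by the Maurer-Cartan relations already in hand. Passing to the Hamiltonian lift, Eq.\ref{MC} reads $\tfrac12\{\Theta,\Theta\}+Q\circ\Theta=0$ modulo constants, whose piece built from two $\Theta_3$'s is the Jacobi/IHX identity killing the $\partial_I$-terms; and Eq.\ref{MC_adv}, $\widehat T^2+[\widehat Q,\widehat T]+Q\circ\widehat T=0$ on $\M$, is exactly the STU identity tying the $\partial_H$- and $\partial_V$-terms together. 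The IHX- and STU-type cancellations are thereby automatic, and what survives is only the two base-point-derivative terms $Q\circ\Theta$ and $Q\circ\widehat T$.

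It then remains to interpret these survivors. Since the coefficients of $\beta c$ lie in $C^\infty(\M)$ and the base-point operator of Eq.\ref{vect_action} is $Q\circ=Q^A(\SL x)\partial_{\SL x^A}$, which is nothing but the homological vector field $Q$ acting as a derivation on those coefficient functions, and since the recipe differentiates only in the $\xi,\eta$ variables, $\beta$ commutes with $Q\circ$: thus $\beta(Q\circ(\,\cdot\,))=Q\,\beta(\,\cdot\,)$. Consequently $\partial(\beta c)=\pm\,\beta(\partial_{\mathrm{CE}}c)$ is $Q$-exact and vanishes once the coefficient ring is reduced to $H_Q(\M)$. The $\Tr$ closing the $\textrm{End}\,\E$ indices turns the entries into honest functions on $\M$, so the surviving class is a cycle in the graph complex with coefficients in $H_Q(\M)$; its cycle condition is exactly the statement that the assigned weights respect IHX and STU, i.e. a weight-system for chord diagrams valued in $H_Q(\M)$.

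For the independence statements I would lean on sec.\ref{sec_LAoVF}. A change of connection or exponential map is an infinitesimal change of the section $\phi$, under which Eq.\ref{change_hat_u} gives $\delta_\Psi\widehat Q=[\widehat Q,\Psi]+Q\circ\Psi$; the argument of prop.\ref{prop_connection_ind} then shows the induced variation of $\beta c$ is $\partial$-exact up to a $Q$-exact piece, leaving the homology class fixed. A change of trivialisation of $\E$ acts on $\widehat T$ by the conjugation-type formula Eq.\ref{gauge_transform_improve}, under which $\Tr[\widehat T_1|\cdots|\widehat T_1]$ is manifestly invariant. The hardest part will be the sign bookkeeping of step two: lining up the Koszul signs of Eqs.\ref{CE_boundary}, \ref{bar_diff}, \ref{g_action_B} with the graph signs of fig.\ref{sign_fac_fig}, and in particular handling that Eq.\ref{MC_adv} holds only after restriction to $z_\alpha=0$, so that one must verify the off-$\M$ part of $\widehat T^2$ does not contaminate the linear coefficient $\widehat T_1$ fed into the trace.
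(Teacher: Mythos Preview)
Your approach is essentially the paper's own: invoke that $\beta$ intertwines the extended CE differential with the graph differential, feed the Maurer--Cartan identities Eq.\ref{MC}, Eq.\ref{MC_adv} into $\partial_{\mathrm{CE}}c$ so that what survives is $Q\circ(\textrm{something})$, and then use that $\beta$ commutes with the base-point operator $Q\circ$ to conclude $\partial(\beta c)$ is $Q$-exact; for the independence statements you cite exactly the same two ingredients as the paper, namely the argument of prop.\ref{prop_connection_ind} for the connection and Eq.\ref{gauge_transform_improve} together with the trace for the trivialisation of $\E$.

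One point you pass over that the paper makes explicit: before worrying about $\partial(\beta c)$, you need the coefficients of $\beta c$ themselves to be $Q$-closed, else the phrase ``values in $H_Q(\M)$'' is empty. The paper extracts this from the \emph{low}-order pieces of the same Maurer--Cartan relations: reading Eq.\ref{MC} at order~2 gives $Q\circ\Theta_3=0$, and reading Eq.\ref{MC_adv} at order~1 gives $Q\circ\widehat T_1=0$, whence $c$ (and therefore $\beta c$) is $Q$-closed on the nose. Your write-up jumps straight to the higher-order pieces (the ones producing $\{\Theta_3,\Theta_3\}$, $\widehat T_1\widehat T_1$, $[\widehat Q,\widehat T]$) and treats the residual $Q\circ$-terms as merely $Q$-exact. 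That is fine for closing $\partial(\beta c)$, but you should add the one-line observation that the \emph{same} equations at one degree lower kill $Q\circ\Theta_3$ and $Q\circ\widehat T_1$, so the coefficients land in $H_Q(\M)$ to begin with. With that addition your plan matches the paper's proof.
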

\begin{proof}
By reading Eq.\ref{MC} at order 2, we see that $Q\circ\Theta_3=0$ modulo an action of $\FR{sp}(\mathbbm{R})$ generated by $\Theta_2$, and by reading Eq.\ref{MC_adv} at order 1, we have $Q\circ {\widehat T}_1=0$ modulo the same action of $\FR{sp}(\mathbbm{R})$. Thus the coefficient of each graph in $\beta c$ is $Q$-closed, due to the $SP(\mathbbm{R})$ invariance of the construction. That the graph chain $\beta c$ is closed is because $\beta$ is a map of complexes: $\partial\beta c=\beta\partial c$, and $\partial c$ is $Q$-exact by using again Eqs.\ref{MC}, \ref{MC_adv}. Thus $\partial\beta c=\beta\,Q\circ \tilde c$ for certain $\tilde c$, and finally $\partial\beta c=Q\circ \beta\tilde c$ descending to zero in $H_Q(\M)$. The proof of the connection independence is similar to the proof of prop.\ref{prop_connection_ind}; the trivialization independence is proved by using Eq.\ref{gauge_transform_improve} and the property of trace\qed
\end{proof}

\subsection{The Dual Story}
As the graph chains are formal linear combinations of graphs, the dual graph cochains are written as a formal linear combination
\bea b=\sum_{[\Gamma]} ~b_{\Gamma}[\Gamma]^*,\label{gph_cochain}\eea
where the 'cographs' are defined through the obvious paring $\langle\left[\Gamma\right]^*,\left[\Gamma'\right]\rangle=\pm1$ if $[\Gamma]=\pm[\Gamma']$ and zero otherwise.

From a graph cochain $b$, we can construct a CE cochain, denoted $\beta^{\dagger} b$ using the recipe
\bea (\beta^{\dagger} b)((f_1,f_2,\cdots,f_p)\otimes [g_1|g_2|\cdots|g_q])=\sum_{[\Gamma]}b_{\Gamma}~\big(\beta_{\Gamma}F\big)\Big|_{t=s=\xi=\eta=0},\label{recipe_dual}\eea
where the lhs denotes the evaluation of $\beta^{\dagger} b$ on $(f_1,f_2,\cdots,f_p)\otimes [g_1|g_2|\cdots|g_q]$.

We are now in possession of all the technical tools to give the second main theorem of the paper. Taking $q=0$,
\begin{theorem}\label{main_B}
Let $\M$ be an even degree symplectic $N$-manifold, then from any graph cochain made of tri-valent graphs, one can construct a covariant cocycle of Lie algebra of Hamiltonian function on $\M$, with values in $C^{\infty}(\M)$. Besides, the cohomology class of the resulting CE cocycle is independent of the choice of connections in defining the exponential map.
\end{theorem}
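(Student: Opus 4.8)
The plan is to run the construction of Section \ref{sec_CC} in the opposite direction to Theorem \ref{main_A}: starting from a trivalent graph cochain $b$, I would first produce a Chevalley--Eilenberg cochain of the Lie algebra of formal Hamiltonian functions $\FR{g}$ on $\mathbbm{R}$ valued in $\BB{R}$, then transport it to a covariant cochain valued in $C^{\infty}(\M)$ by inserting the hatted generators $\hat u$, and finally invoke the section-independence result. Concretely I would set $c=\beta^{\dagger}b$ via the dual recipe Eq.\ref{recipe_dual}, so that each trivalent vertex reads off the cubic part of a Hamiltonian function and each edge contracts two such cubics with $\Omega^{-1}$. The candidate output is then $\FR{c}(u_1,\dots,u_p)=c(\hat u_1,\dots,\hat u_p)$ of Prop.\ref{prop_cochain_convert}, and it will be the desired covariant cocycle provided $c$ is a cocycle and is basic with respect to $K=SP(\mathbbm{R})$.

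The cocycle property is the first thing to nail down, and it requires no hypothesis on $b$ beyond trivalence. Since $\beta^{\dagger}$ intertwines the graph cochain differential with the CE differential $\delta$, it suffices to show that a trivalent graph cochain is automatically a graph cocycle. This is a dimension count at fixed loop number $\ell=E-V+1$: the CE differential adjoins one Hamiltonian argument, i.e.\ one internal vertex, but at fixed $\ell$ with all valences $\geq 3$ the handshake inequality $2E\geq 3V$ forces $V\le 2(\ell-1)$, with equality exactly for trivalent graphs. Hence no graph with one more vertex exists at the same loop number, the graph cochain differential of $b$ vanishes, and $c=\beta^{\dagger}b$ is a CE cocycle.

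Basic-ness is the second input demanded by Prop.\ref{prop_cochain_convert}, and it too follows from the trivalent structure. The $Sp$-invariance is immediate because $c$ is assembled purely from the $\Omega^{-1}$ contractions $\beta_{ij;}$, which are $Sp$-equivariant and return a scalar. Horizontality is the statement that $c$ vanishes when one argument lies in $\FR{sp}\cong$ quadratic Hamiltonians; but each vertex applies one $\partial/\partial\xi$ per incident half-edge, so a trivalent vertex extracts the cubic Taylor coefficient, which annihilates a quadratic Hamiltonian. Thus $c$ is $SP$-basic, and — using the $SP$-exponential map Eq.\ref{exponent_shifted}, whose Grothendieck connection is $\FR{sp}$-valued by Eq.\ref{Gro_conn_shifted} so that each $\hat u$ is itself a Hamiltonian function — Prop.\ref{prop_cochain_convert} delivers a genuine cocycle $\FR{c}$ of the Lie algebra of Hamiltonian functions on $\M$ valued in $C^{\infty}(\M)$.

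For connection independence I would reproduce the transgression argument of Prop.\ref{prop_connection_ind}: an infinitesimal change of connection shifts the global section of $\FR{B}/SP$ by some $\Psi\in\FR{sp}$ fixing the origin, under which $\hat u$ transforms as in Eq.\ref{change_hat_u}; contracting the closed cochain $c$ with $\Psi$ produces a primitive $\FR{c}_{\Psi}$ with $\delta_{\Psi}\FR{c}=\delta\FR{c}_{\Psi}$, so the class is unchanged. The main obstacle I anticipate is bookkeeping rather than conceptual: upgrading Prop.\ref{prop_connection_ind} from its $k=2$ smooth-manifold form to arbitrary $p$ in the graded symplectic setting, tracking the Koszul signs produced by the suspension of degree $\nb+1$, and confirming that $\Psi$ may be taken $\FR{sp}$-valued so that the transgression remains inside the Hamiltonian functions and preserves $SP$-basicness. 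Everything else reduces to the already-established Props.\ref{prop_key_id} and \ref{prop_cochain_convert} together with the chain-map property of $\beta^{\dagger}$.
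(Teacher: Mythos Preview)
Your proposal follows essentially the same route as the paper's proof: apply $\beta^{\dagger}$ to land in the type-1 CE complex, observe that trivalence gives both closedness and $SP$-basicness, then invoke Props.~\ref{prop_cochain_convert} and \ref{prop_connection_ind}. One small correction to your last paragraph: the $\Psi$ arising from a change of symplectic connection is \emph{not} $\FR{sp}$-valued---an $\FR{sp}$ shift would not move the section of $\FR{B}/SP$ at all---but is rather a Hamiltonian starting at cubic order in $\xi$ (cf.\ sec.~\ref{sec_CD}); this is all Prop.~\ref{prop_connection_ind} requires, so the transgression argument goes through and your anticipated obstacle evaporates.
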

\begin{proof}
The procedure is to use recipe \ref{recipe_dual} to convert the graph cocycle first to a CE cocycle of Hamiltonian functions on a flat space $\mathbbm{R}$ of the same dimension as $\M$. The tri-valent condition, guarantees amongst other things that the CE cochain is basic w.r.t $SP(\mathbbm{R})$. Then one uses the recipe \ref{prop_cochain_convert} to convert the type 1 cocycle to a cocycle of type $1^\prime$. The connection independence is demonstrated in prop.\ref{prop_connection_ind}\qed
\end{proof}

\section{Examples}\label{sec_EX}
\begin{example}\emph{Type 1 CE Cocycles}\\
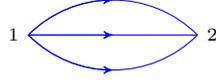
\begin{figure}[h]
\begin{center}
\begin{tikzpicture}[scale=0.9]
\draw [-,blue] (0:0cm) -- (0:2.5cm);
\draw [->,blue] (0:0cm) -- (0:1.25cm);

\draw [-,blue](0:2.5cm) arc (45:135:1.7675cm);
\draw [->,blue](0:0cm) arc (135:90:1.7675cm);

\draw [-,blue](0:0cm) arc (225:315:1.7675cm);
\draw [->,blue](0:0cm) arc (225:270:1.7675cm);

\node (p) at (0,0) {};
\node (q) at (2.5,0) {};

\draw (p) node [left] {\scriptsize{1}}
 (q) node [right] {\scriptsize{2}};
\end{tikzpicture}\caption{The simplest cocycle}\label{cocycle_fig}
\end{center}
\end{figure}
It can be shown that if in a graph cochain Eq.\ref{gph_cochain}, $b_{\Gamma}\neq0$ only for 3-valent graphs, then the graph cochain is a cocycle. By applying the recipe Eq.\ref{recipe_dual} to such a cocycle, one obtains a type 1 CE cocycle.

Take the graph cochain defined by fig.\ref{cocycle_fig}, that is, $b_{\Gamma}=0$ except for $b_{\ominus}=1$. The prescription gives the following cochain $c^2=\beta^{\dagger}[\ominus]^*$ (setting as in sec.\ref{sec_RftCGC})
\bea c^2(f_1,f_2)=\frac{1}{6}(-1)^{|f_1|}\big(f_1\overleftarrow{\partial}_{ABC}\big)(\Omega^{-1})^{AD}(\Omega^{-1})^{BE}(\Omega^{-1})^{CF}\big(\partial_{FED}f_2\big),\label{cocycle_2pt}\eea
where the entire rhs is evaluated at the origin. The sign factor $(-1)^{|f_1|}$ follows from the recipe and is crucial for $c^2$ to satisfy Eq.\ref{sym_prop}. A skeptical reader may wish to check that $c^2$ is closed for himself.
\end{example}
\begin{example}\emph{Type 1${}^\prime$ CE Cocycles}\\
\noindent For clarity, we focus on the smooth case $\M=M^{2n}$, with symplectic form $\Omega$. We use the exponential map Eq.\ref{exponent_shifted} for a symplectic manifold, thus $\FR{G}$ is valued in $\FR{sp}$. We define $u_f$ to be the vector field generated by $f\in C^{\infty}(M)$. Since $\BB{R}^{2n}$ has a symplectic form $\Omega(\textsl{x})$, which is pulled back from $M^{2n}$ by the exponential map. Then let $\hat f$ the Hamiltonian lift of $\widehat{u_f}$ w.r.t $\Omega(\textsl{x})$.
One has the freedom to set the constant term of $\hat f$ to be zero; this done, $\hat f$ vanishes at the origin as $\xi^2$.

Using the prescription of Eq.\ref{cochain_convert}, we construct a covariant cochain of type $1^{\prime}$
\bea \FR{c}^2(f,h)=c^2(\hat{f},\hat{h})=\frac16\big(f^{\mu\nu\rho}
+f^{\lambda}\SF{R}_{\lambda}^{~\mu\nu\rho}\big)\big(h_{\mu\nu\rho}+h^{\kappa}\SF{R}_{\kappa\mu\nu\rho}\big)\in C^{\infty}(M),\label{cocycle_2pt_cov}\eea
where $f_{i_1\cdots i_n}=\nabla_{i_1}\cdots\nabla_{i_{n-1}}\partial_{i_n}f\big|_{\textsl{x}}$ and all indices are raised (resp. lowered) with $\Omega^{-1}(\textsl{x})$ (resp. $\Omega(\textsl{x})$).

To check explicitly Eq.\ref{cocycle_2pt_cov} is closed requires some effort, the key step is to show
\bea&&\frac16\xi^{\ga}\xi^{\gb}\xi^{\gc}\big(f^{\rho}\nabla_{\rho}h_{\ga\gb\gc}\big)=\frac16\xi^{\ga}\xi^{\gb}\xi^{\gc}\Big\{-f_\ga^{~\rho}h_{\rho\gb\gc}-2f^{\rho}\SF{R}_{\rho\ga~\gb}^{~~\,\sigma}h_{\sigma\gc}
+\nabla_\ga\nabla_\gb(f^{\rho}h_{\rho\gc})-f_{\ga\gb}^{~~\,\rho}h_{\rho\gc}\nn\\
&&\hspace{5.3cm}-f_\gb^{~\rho}h_{\ga\rho\gc}
-f_\ga^{~\rho}\SF{R}_{\rho\gb~\gc}^{~~\;\sigma}h_\sigma-f^{\rho}(\nabla_{\ga}\SF{R}_{\rho\gb~\gc}^{~~\;\sigma})h_\sigma-f^{\rho}\SF{R}_{\rho\gb~\gc}^{~~\;\sigma}h_{\ga\sigma}\Big\}\nn\eea
by commuting $f^{\rho}\nabla_{\rho}$ over the derivatives $\nabla_{\ga}\nabla_{\gb}\partial_{\gc}$; this will then lead to
\bea f^{\mu}\partial_{\mu}\hat{h}-h^{\mu}\partial_{\mu}\hat{f}+\{\hat{f},\hat{h}\}-\widehat{\{f,h\}}=0,\nn\eea
where the first curly brace $\{-,-\}$ is the Poisson bracket on $C^{\infty}(\BB{R}^{2n})$, the second on $C^{\infty}(M^{2n})$. This equation is the Hamiltonian version of Eq.\ref{key_id}.
\end{example}

\begin{example}\emph{Type 2 cycles of extended graph complex from extended CE cycles}\\
\noindent Let $\FR{g}$ be a Lie algebra $\FR{su},\,\FR{sp}$ or $\FR{so}$. Let $\M=\FR{g}[1]$ with coordinates $\ell^{\ga}$. Let the matrices $T_{\ga}$ be a representation of $\FR{g}$, and the killing metric $\eta_{\ga\gb}=\Tr[T_{\ga}T_{\gb}]$ plays the role of the symplectic form on $\FR{g}[1]$. Finally let $\Theta=1/6\eta_{\gc\kappa}f^{\kappa}_{~\ga\gb}\ell^{\gc}\ell^{\ga}\ell^{\gb}$, then one can check that
\bea c=\frac14\Tr[T_{\ga}T_{\gb}T_{\gc}T_{\gd}]~(~)\otimes [\ell^{\ga}|\ell^{\gb}|\ell^{\gc}|\ell^{\gd}]
+\frac{1}{3}\Tr[T_{\ga}T_{\gb}T_{\gc}]~(\Theta)\otimes [\ell^{\ga}|\ell^{\gb}|\ell^{\gc}]
+\frac{1}{2}\frac{1}{2!}\Tr[T_{\ga}T_{\gb}]~(\Theta,\Theta)\otimes [\ell^{\ga}|\ell^{\gb}]\nn\eea
is a cycle in the extended CE complex.

Apply the recipe Eq.\ref{recipe} to $c$
\bea\beta c=\frac{d_GC_2(G)}{2}\big(\frac{1}{4}[\Gamma_5]+\frac{1}{3}[\Gamma_6]-\frac{1}{2}[\Gamma_7]\big)
-\frac{d_rC_2^2(r)}{4}\big([\Gamma_5]+2[\Gamma_4]\big)~,\nn\eea
where $d_r (d_G)$ is the dimension of the (adjoint) representation. One may use Eq.\ref{gph_diff_ex} to check that each combination in the two braces is a graph cycle. This is the \emph{Lie algebra weight-system}.
\end{example}
\subsection{Weights for Graphs Valued in $Q$-cohomology}\label{sec_WfGViQc}
This is the central (and non-trivial) application of our result and therefore deserves a separate section.

Let $M^{4n}$ be a holomorphic symplectic manifold with complex coordinates $x^i,x^{\bar i}$. The holomorphic symplectic form is $\Omega_{ij}dx^idx^j$, and one can always pick a torsionless connection simultaneously preserving the complex structure and the symplectic structure, thus in the complex basis, only the combination $\Gamma^i_{jk}$, $\Gamma^{\bar i}_{\bar j\bar k}$ is non-zero.

Consider the $NQ$-manifold
\bea \M=T_{(0,1)}[1]M,\label{first_choice}\eea
where $T_{(0,1)}$ denotes the anti-holomorphic tangent bundle. Locally, the coordinates are
\bea \deg 0:~x^i,~x^{\bar i};~~~~~~\deg 1:~v^{\bar i},\nn\eea
and the 2-form $\Omega_{ij}$ is assigned degree 2. The $Q$-vector field
\bea Q=v^{\bar i}\frac{\partial}{\partial x^{\bar i}}\label{Dolbeault}\eea
corresponds to the Dolbeault differential.

The local model $\mathbbm{R}$ for $\M$ is $\mathbbm{R}=\BB{C}^{4n}\times \BB{C}^{2n}[1]$, and we denote the flat coordinates as $\xi^i,\,\xi^{\bar i}$ and $\nu^{\bar i}$. One may proceed to apply the exponential map, but since eventually we shall only need a CE cochain of Hamiltonian function in the variable $\xi^i$, with symplectic form $\Omega_{ij}$ (see the next remark), we may set $\xi^{\bar i}=0$ and $\nu^{\bar i}=0$ (not $v^{\bar i}$).

The exponential map is the holomorphic half of Eq.\ref{exponent_shifted},
\bea &&\phi_{hol}^i=x^i+\xi^i-\frac12\Gamma^i_{mn}\xi^m\xi^n+\left (-\frac16\partial_j\Gamma^i_{mn}+\frac13\Gamma^i_{pj}\Gamma^p_{mn}
-\frac{1}{24}\textsf{R}^i_{~jmn} \right ) \xi^j\xi^m\xi^n+{\cal O}(\xi^4),\nn\\
&&\hspace{3.5cm}\textsf{R}^{i}_{~jmn}=(\Omega^{-1})^{li}R_{lj~n}^{~~k}\Omega_{km},\nn\eea
and $\phi_{hol}^{\bar i}=x^{\bar i}$, $\phi_{hol}^{v^{\bar i}}=v^{\bar i}$. For more in depth discussion of the holomorphic exponential maps, see ref.\cite{Kapranov}.

One may compute the quantity (where the Poisson bracket is taken with $\Omega^{-1}({x})$),
\bea {\widehat Q}=\{v^{\bar i}\Theta_{\bar i},-\},~~~~~~\Theta_{\bar i}=\frac{1}{6}\xi^i\xi^j\xi^k\textsf{R}_{\bar iijk}+{\cal O}(\xi^4).\nn\eea
And $\Theta$ satisfies the Maurer-Cartan equation
\bea \bar\partial_{[\bar i}\Theta_{\bar j]}=-\{\Theta_{\bar i},\Theta_{\bar j}\},~~~\textrm{i.p.}~~~\bar\partial_{[\bar i}\Theta_{\bar j]}\Big|_{\xi^3}=0.\label{Maurer_Cartan}~.\eea
In fact, if $M$ is hyperK\"ahler, $\Theta$ can be computed to all orders in $\xi$ (see ref.\cite{oddCS})
\bea
\Theta_{\bar i}(\xi)=\sum_{n=3}^{\infty}~\frac{1}{n!}\nabla_{l_4}\cdots\nabla_{l_n}\textsf{R}_{\bar il_1l_2l_3}\,\xi^{l_1}...\xi^{l_n}~.\label{def_Theta}\eea
\begin{remark}
 It may seem that we are not following our own recipe in theorem \ref{main_A} as we only have a holomorphic symplectic form $\Omega_{ij}$. To go by the rules, one has to take $\M=T^{(0,1)}[2]T_{(0,1)}[1]M$ to give $x^{\bar i}$ and $v^{\bar i}$ their conjugate momenta. But it is fairly clear that, this will lead to a Hamiltonian lift of $\widehat Q$ that includes $\Theta$ above, plus terms at most linear in the momenta conjugate to $\xi^{\bar i},\,\nu^{\bar i}$. These terms will vanish once we plug the Hamiltonian function into a tri-valent graph. Thus setting $\xi^{\bar i},\,\nu^{\bar i}$ to zero from the beginning is a slight short-cut we take.
\end{remark}

So far at order $\xi^3$ the effect of the Grothendieck connection has not shown up, because we notice that $\Theta_{\bar i}$ starts at order $\xi^3$, namely the pushforward of $\bar\partial$ has no $\xi^0\partial_{\xi}$ term and hence the effect of the Grothendieck connection comes in only at the $4^{th}$ order, which is not picked up by the tri-velent graphs. To see the indispensability of the Grothendieck connection, one needs to consider a situation where the $0^{th}$ term in the expansion of the pushforward of $Q$ is nonzero.

To this end let us twist the $Q$-vector field Eq.\ref{Dolbeault} slightly. Suppose there is a group $G$ acting on $M$ through a holomorphic moment map $\mu_{\ga}$, $\ga=1\sim \textrm{rk}_G$, $\partial_{\bar i}\mu_{\ga}=0$. Let now $\M$ be
\bea T_{(0,1)}[1]M\times \FR{g}[1],\nn\eea
and we let the coordinates of $\FR{g}[1]$ be $\ell^{\ga}$ of degree 1. The twisted $Q$-vector field is now
\bea Q=v^{\bar i}\frac{\partial}{\partial x^{\bar i}}+\ell^{\ga}(\partial_j\mu_{\ga})(\Omega^{-1})^{ji}\frac{\partial}{\partial x^{i}}
-\frac{1}{2}f^{\gc}_{~\ga\gb}\ell^{\ga}\ell^{\gb}\frac{\partial}{\partial\ell^{\gc}},\label{Q_RW}\eea
where the middle term is going to lead to (amongst other things) a non-zero $0^{th}$ term $\big(\ell^{\ga}(\partial_j\mu_{\ga})(\Omega^{-1})^{ji}\big|_{\SL{x}}\big)\partial_{\xi^{i}}$.

One can again compute the quantity $\widehat{Q}$
\bea&&~~\widehat Q=\{v^{\bar i}\Theta_{\bar i},-\}+\{\ell^{\ga}\textsf{M}_{\ga},-\},\nn\\
\textrm{where}&&\textsf{M}_{\ga}=\frac12(\nabla_i\partial_j\mu_{\ga})\xi^i\xi^j
+\frac16(\nabla_i\nabla_j\partial_k\mu_{\ga}-(\partial_l\mu_{\ga})\textsf{R}^l_{\;ijk})\xi^i\xi^j\xi^k+{\cal O}(\xi^4),\label{expansion_M}\eea
as usual all the terms involving $\mu_{\ga},\,\SF{R}$ etc are evaluated at $\textsl{x}$.
In this expression, the effect of the Grothendieck connection is to correct the coefficient of the curvature term from $-1/24$ to $-1/6$, which is crucial for the
\begin{proposition}
Under a change of the connection $\Gamma^i_{ij}\to\Gamma^i_{ij}+\gamma^i_{ij}$ (but retaining the torsionless and $J,\,\Omega$-preserving property), we have up to order $\xi^3$
\bea \delta_{\Gamma}(v^{\bar i}\Theta_{\bar i}+\ell^{\ga}\mathsf{M}_{\ga})=Q\circ\Psi+\{\ell^{\ga}\mathsf{M}_{{\ga}2},\Psi\}~,
~~~\Psi=\frac16\gamma^l_{ik}\Omega_{lj}\xi^i\xi^j\xi^k~,\nn\eea
where $\textsf{M}_{\ga2}$ is the quadratic term in the expansion of $\textsf{M}_{\ga}$ in Eq.\ref{expansion_M}.
\end{proposition}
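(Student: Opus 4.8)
The plan is to read this identity as the Hamiltonian incarnation of the general transformation law Eq.\ref{change_hat_u} for $\hat u$, specialized to $u=Q$ and to the particular change of section induced by varying the connection. Recall from sec.\ref{sec_CD} that the replacement $\Gamma^i_{jk}\to\Gamma^i_{jk}+\gamma^i_{jk}$ is exactly a change of local isomorphism, generated infinitesimally by a vector field on the flat model whose leading piece is $\tfrac12\gamma^i_{jk}\xi^j\xi^k\,\partial_{\xi^i}$. Since both the old and the new connection are torsionless and preserve $\Omega$, the lowered tensor $\gamma_{ijk}:=\Omega_{il}\gamma^l_{jk}$ is \emph{totally symmetric}: torsionlessness gives symmetry in the last two slots, while $\delta_\Gamma(\nabla\Omega)=0$ forces symmetry under exchange of the first and last, and the two transpositions generate all of $S_3$. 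Hence this vector field admits a Hamiltonian lift, and that lift is precisely the $\Psi=\tfrac16\gamma^l_{ik}\Omega_{lj}\xi^i\xi^j\xi^k$ in the statement; the cubic part of the generating vector field lifts only to a quartic Hamiltonian, which I will argue is irrelevant at the order we work.

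First I would apply Eq.\ref{change_hat_u} with $u=Q$, obtaining $\delta_\Gamma\widehat Q=[\widehat Q,\Psi_{\mathrm{vect}}]+Q\circ\Psi_{\mathrm{vect}}$ as an identity of vector fields on the flat model. Because the connection change fixes $\Omega$, the operation $\delta_\Gamma$ commutes with taking Hamiltonian lifts, so the left side lifts to $\delta_\Gamma(v^{\bar i}\Theta_{\bar i}+\ell^\ga\textsf{M}_\ga)$; the commutator of the two Hamiltonian vector fields $\widehat Q$ and $\Psi_{\mathrm{vect}}$ lifts to the Poisson bracket $\{v^{\bar i}\Theta_{\bar i}+\ell^\ga\textsf{M}_\ga,\Psi\}$; and $Q\circ\Psi_{\mathrm{vect}}$ lifts to $Q\circ\Psi$, up to a term discussed below. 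The decisive simplification is an order count in $\xi$: since $\Theta_{\bar i}$ begins at order $\xi^3$ and $\Psi$ is order $\xi^3$, the bracket $\{\Theta_{\bar i},\Psi\}$ is $O(\xi^4)$ and drops; writing $\textsf{M}_\ga=\textsf{M}_{\ga2}+\textsf{M}_{\ga3}$ as in Eq.\ref{expansion_M}, the term $\{\textsf{M}_{\ga3},\Psi\}$ is likewise $O(\xi^4)$. Thus only $\{\ell^\ga\textsf{M}_{\ga2},\Psi\}$ survives at the cubic order that actually feeds the tri-valent vertices, collapsing the full bracket to $\{\ell^\ga\textsf{M}_{\ga2},\Psi\}$ and producing the stated formula.

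The hard part will be the careful treatment of $Q\circ\Psi$ under the lift, since $\Psi$ is written with an explicit $\Omega_{lj}(\textsl{x})$ and the base-point derivative $Q\circ$ (Eq.\ref{vect_action}) therefore also hits $\Omega$ through the holomorphic moment-map part of Eq.\ref{Q_RW}; one must check that the resulting $(Q\circ\Omega)$ contributions match the corresponding anomaly $(Q\circ\Omega^{-1})\,\partial_\xi\Psi$ that arises when passing from the vector field $Q\circ\Psi_{\mathrm{vect}}$ to a Hamiltonian, using $\partial_{\bar i}\Omega=0$ and the fact that $\mu_\ga$ generates a symplectomorphism. As an independent, fully explicit verification one can instead match both sides directly at order $\xi^3$: on the left this needs the curvature variation $\delta_\Gamma\textsf{R}^l_{\;ijk}=\nabla_i\gamma^l_{jk}-\nabla_j\gamma^l_{ik}$ (Palatini), symmetrized against $\xi^i\xi^j\xi^k$, feeding both $\delta_\Gamma\Theta_{\bar i}$ and the $(\partial_l\mu_\ga)\textsf{R}^l_{\;ijk}$ piece of $\delta_\Gamma\textsf{M}_{\ga3}$, together with $\delta_\Gamma(\nabla_i\nabla_j\partial_k\mu_\ga)$; on the right one expands $Q\circ\Psi=v^{\bar i}\partial_{\bar i}\Psi+\ell^\ga(\partial_j\mu_\ga)(\Omega^{-1})^{ji}\partial_{x^i}\Psi$ together with the single surviving bracket, matching the holomorphic ($v^{\bar i}$) and moment-map ($\ell^\ga$) sectors separately. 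It is precisely the Grothendieck-induced correction of the curvature coefficient in $\textsf{M}_\ga$ from $-\tfrac1{24}$ to $-\tfrac16$ (visible already in Eq.\ref{Gro_conn_shifted}) that makes the two sectors balance.
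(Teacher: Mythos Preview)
Your approach is exactly the one the paper takes: the paper's entire proof is the single sentence ``This result is a special case of Eq.\ref{change_hat_u}, truncated at order $\xi^3$,'' together with the remark that a direct computation is also possible. Your proposal spells out precisely these two routes---the Hamiltonian lift of Eq.\ref{change_hat_u} with the $\xi$-order count that isolates the $\{\ell^{\ga}\mathsf{M}_{\ga2},\Psi\}$ contribution, and the explicit Palatini-style check---so there is nothing to add beyond noting that your detailed treatment of the $Q\circ\Omega$ issue in the lift is more than the paper itself supplies.
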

This result is a special case of Eq.\ref{change_hat_u}, truncated at order $\xi^3$. One can of course do a direct computation, in fact, this was how we first came to realize the necessity of the Grothendieck connection in ref.\cite{RWCS}, after accidentally discovering that shifting $-1/24$ to $-1/6$ made everything work.

If $c^k$ is a cocycle constructed from tri-valent graphs, then by evaluating the corresponding $\FR{c}^k$ on the functions $\Theta+\textsf{M}$, one obtains an element
\bea \FR{c}^k(\Theta+\textsf{M},\cdots,\Theta+\textsf{M})\in \bigoplus_{p+q=k}\Omega_M^{(0,p)}\otimes \wedge^q \FR{g}^*,\nn\eea
which is annihilated by the differential Eq.\ref{Q_RW}. These are the equivariant Rozansky-Witten classes. It is equivariant in the sense that, in the real setting, the complex $\Omega_M^p\otimes\wedge^q \FR{g}^*$ is a model for the de Rham complex of $M\times_{G}EG$ with $EG$ being the universal $G$-bundle. By applying the result of ref.\cite{Bott}, one can turn this complex into the more familiar Cartan model of equivariant cohomology.

To see what is special about the holomorphic setting chosen above, let us take $M^{2n}$ to be a symplectic manifold with symplectic form $\Omega_{\mu\nu}$. Pick as before a torsionless connection $\Gamma^{\mu}_{\nu\rho}$ preserving $\Omega$, and
$Q=v^{\mu}{\partial _{\mu}}$
now corresponds to the de Rham differential. Using the exponential map Eqs.\ref{exponent_shifted}, \ref{exponent_graded} to compute $\phi^{-1}_* Q$ gives
\bea \phi^{-1}_* Q=v^{\mu}\frac{\partial}{\partial x^{\mu}}+\frac18v^{\rho}\big(\textsf{R}^{\mu}_{~\rho\ga\gb}-2\textsf{R}_{\ga\rho~\gb}^{~~\;\mu}\big)\xi^{\ga}\xi^{\gb}\frac{\partial}{\partial \xi^{\mu}},
~~~~\textsf{R}_{\mu\ga\gb\gc}=R_{\mu\ga~\gc}^{~~\;\kappa}\Omega_{\kappa\gb}.\nn\eea
The second term fails to be Hamiltonian. This is simply because the vector field $v^{\mu}\partial_{\mu}$ does not preserve $\Omega_{\ga\gb}$ as $v^{\bar i}\partial_{\bar i}$ does $\Omega_{ij}$.

It was pointed out in ref.\cite{Sawon} that from any
holomorphic vector bundle $E$ over $M$ with connection $A$ and curvature $K$ (of type (1,1), naturally), one can
construct a representation for $Q$ Eq.\ref{Dolbeault} (we will not consider the twisted case). First denote the deg 0
coordinate of the fibre of $E$ as $z_{\ga}$ and its flat counterpart as $\zeta_{\ga}$.
The lift of $Q$ to $E$ is
\bea Q^{\uparrow}=v^{\bar i}\frac{\partial}{\partial x^{\bar i}}+v^{\bar i}(A_{\bar i})^{\ga}_{~\gb}z_{\ga}\frac{\partial}{\partial z_{\gb}},\nn\eea
corresponding to the (0,1)-covariant derivative $dx^{\bar i}\nabla_{\bar i}$.

In applying the exponential map Eq.\ref{exponent_graded}, we will expand around $z_{\ga}=0$ (not $\zeta_{\ga}$). We can compute
\bea \widehat T=\widehat{Q^{\uparrow}}-\widehat Q=-\big(v^{\bar i}\textsf{K}_{\bar i\gb}^{\ga}\big)\zeta_{\ga}\frac{\partial}{\partial\zeta_{\gb}},
~~~\textsf{K}_{\bar i\gb}^{\ga}=\xi^j (K_{\bar ij})^{\ga}_{~\gb}+\frac12\xi^j\xi^k(\BS{\nabla}_jK_{\bar ik})^{\ga}_{~\gb}+{\cal O}(\xi^3).\nn\eea
Again, if the manifold is hyperK\"ahler, $\textsf{K}$ can be computed to all orders
\bea\textsf{K}_{\bar i\gb}^{\ga}=\sum_{p=0}^{\infty}\frac{1}{(p+1)!}(\BS{\nabla}_{l_1}\cdots \BS{\nabla}_{l_p}K_{\bar il_{p+1}})^{\ga}_{~\gb}\xi^{l_1}\cdots\xi^{l_{p+1}}~,\nn \eea
$\textsf{K}$ satisfies a neat relation
\bea {\nabla}_{[\bar i}\textsf{K}_{\bar j]}=\textsf{K}_{[\bar i}\textsf{K}_{\bar
j]}-\{\Theta_{[\bar i},\textsf{K}_{\bar j]}\},~~~\textrm{i.p.}~~~{\nabla}_{[\bar i}\textsf{K}_{\bar j]}\Big|_{\xi^1}=0.\label{neat_relation}\eea

To complete the discussion, we use $\Theta$ and $\textsf{K}$ to form the cycle
\bea c=\sum_{p+q=m}c_{p,q},~~c_{p,q}=\frac{(-1)^q}{p!q}(\underbrace{v^{\bar i}\Theta_{\bar i},\cdots,v^{\bar i}\Theta_{\bar i}}_p)\otimes\Tr[\underbrace{v^{\bar i}\textsf{K}_{\bar i}|\cdots|v^{\bar i}\textsf{K}_{\bar i}}_q]\in\Omega^{(0,m)}_M,\nn\eea
and by applying the prescription of sec.\ref{sec_RftCGC}, we will obtain a graph chain with coefficients in $\Omega^{(0,m)}(M)$, and the boundary of this graph chain is $\bar\partial$-exact. This is the \emph{Rozansky-Witten weight-system}.

If one plugs this extended graph chain into a cochain made of chord diagrams, one obtains a cohomology class in $H^{m}_{\bar \partial}(M)$. See refs.\cite{Sawon} and \cite{WilsonLoop} for some more detailed calculation. The cohomology class is independent of the choice of the connections $\Gamma$ or $A$, these are the so called Rozansky-Witten classes.

Finally, to obtain even more examples, one may try to use the Courant algebroids and construct certain representations on them, though non-trivial examples are at the time being unknown. One may also take a symplectic manifold with some commuting circle action, and construct weight-systems similar to the twisted RW case (though dropping the Dolbeault part), and these are expected to carry information of the foliation of the manifold by these circle actions.

\appendix
\section{The Chevalley-Eilenberg Differential}
We give the complete expression of the differential of an extended CE cochain, valid for general $\nb$. The setting is as follows: $\mathbbm{R}$ is a non-negatively graded vector space, $f_i$ are the formal vector fields thereon. Fix a graded vector space $\BB{V}$, $g_i$ are formal polynomials on $\mathbbm{R}$ valued in $\textrm{Mat}(\BB{V})$. Just as in sec.\ref{sec_RftCGC}, we consider the $g$'s entry-wise, and treat them as mere polynomials on $\mathbbm{R}$.

The full differential is presented as a sum $\delta=\delta_I+\delta_V+\delta_H$, each of which is dual to the operators in fig.\ref{dec_graph_diff_fig}. The sign factors below are obtained from combining Eqs.\ref{CE_boundary}, \ref{CE_diff}, \ref{bar_diff} and \ref{g_action_B}. Let $c=\sum_{p,q}\,c^{p,q}$ be a cochain,
\bea
(\delta_I
c)(f_1,\cdots f_{p};~g_1,\cdots g_q)=\sum_{1\leq i<j\leq
p}(-1)^{s_{ij}}c((-1)^{|f_i|}[f_i,f_j],f_1\cdots \hat{i}\cdots
\hat{j}\cdots f_{p}; g_1,\cdots,g_q),\nn\\
(\delta_V
c)(f_1,\cdots f_{p};~g_1,\cdots g_q)
=-\sum_{1\leq i\leq p;1\leq j\leq
q}(-1)^{t_{ij}}(f_1,\cdots\hat{i}\cdots f_{p};
g_1\cdots g_{j-1},f_i\circ g_j,g_{j+1}\cdots g_q),\nn\\
(\delta_H
c^{p,q})(f_1,\cdots f_p;~g_1,\cdots g_{q})=-\sum_{1\leq j\leq q}(-1)^{u_{j}}(f_1,\cdots f_p;
g_jg_{j+1},g_{j+2}\cdots g_{q},g_1,\cdots g_{j-1})~,\label{CE_diff_full}\eea
\bea
s_{ij}&=&|\bar f_i|\sum_{k=1}^{i-1}|\bar f_k|+|\bar f_j|\sum_{k=1}^{j-1}|\bar f_k|+|\bar f_i||\bar f_j|~,\nn\\
t_{ij}&=&|\bar f_i|(\sum_{k=i+1}^{p}|\bar f_k|+\sum_{k=1}^{j-1}|\bar g_k|)+\sum_{k=1}^{p}|\bar f_k|+\sum_{k=1}^{j-1}|\bar g_k|~,\nn\\
u_{j}&=&\sum_{k=1}^p|\bar f_k|+|\bar g_j|+\sum_{k=1}^{j-1}|\bar g_k|\sum_{m=j}^q|\bar g_m|~,\label{sgn_CE_diff_full}\eea
where $|\bar f|=|f|+\nb+1$ and $|\bar g|=|g|+1$. The formidable signs disappear for most of our applications, since $f$, $g$ will mostly be of odd degree and $\nb$ even in the text.

\vskip1.5cm
\noindent{\emph{Note added}\\
\smallskip
\noindent
About two months after we posted our paper, an interesting paper \cite{Costello} by Costello appeared, which is also partly concerned with constructing $L_{\infty}$-structures from a curved $NQ$-manifold, and Costello termed these structures the 'curved $L_{\infty}$-structure'. We give in this appendix certain translation of Costello's formulation into our language to facilitate the comparison of the two papers.

In this paper, we constructed the Grothendieck connection following the idea of Bott and Haefliger because of its geometrical nature. However, a more common formulation of the Grothendieck connection $\FR{G}$, which is also the one adopted in ref.\cite{Costello}, is through the jet bundle $\mathscr{J}\to\M$. The Grothendieck connection $\FR{G}$ is regarded as a connection of this bundle. As $\FR{G}$ is flat, one has a differential $D$ by twisting the de Rham (or Dolbeault) differential on $\M$ by $\FR{G}$. Then the statement such as Eq.\ref{temp4} merely says that $\phi^{-1}_{\SL{x}}u$ is a parallel section on $\mathscr{J}\otimes T\M$.

The exponential map gives an identification $\mathscr{J}\sim \widehat{\textrm{sym}}^{\sbullet}(T_{\M}^*)$, where $\widehat{\textrm{sym}}^{\sbullet}(T_{\M}^*)$ is the completed symmetric algebra of $T^*_{\M}$. The previous differential $D$ can be passed onto $\widehat{\textrm{sym}}^{\sbullet}(T_{\M}^*)$ and making it a differential graded algebra over $\Omega^{\#}(M)$, where ${}^{\#}$ is the forgetful functor that forgets the differential.
The dga $\widehat{\textrm{sym}}^{\sbullet}(T_{\M}^*)$ has an ideal $\widehat{\textrm{sym}}^{>0}(T_{\M}^*)$, and this ideal is not preserved by $D$. In our paper, this simply translates to the property that $D$ may have a non vanishing $0^{th}$ Taylor coefficient. In this situation, we do not have an $L_{\infty}$-structure on $T[1]M$.

To proceed, one notices that $\Omega^{\#}(M)$ has a nilpotent ideal $I$ given by the elements of degree $>0$. Costello requires that those terms in $D$ that do not preserve $\widehat{\textrm{sym}}^{>0}(T_{\M}^*)$ be in $I$, and thus one gets an $L_{\infty}$-structure modulo $I$ (see def.2.1.1 in ref.\cite{Costello}). In our paper, we showed that in such situation one has an $L_{\infty}$-structure up to $Q$-exact terms, which is a slight refinement of the previous formulation by Costello. For example, for the Rozansky-Witten case, we have an $L_{\infty}$-structure up to $\bar\partial$-exact terms, while the equivariant RW case gives an $L_{\infty}$-structure up to a $Q$ given in Eq.\ref{Q_RW}. These structures are termed the 'curved $L_{\infty}$-structure', and Costello proves likewise the independence of the curved $L_{\infty}$-structure on the connection used in the exponential map, albeit in a fairly abstract way (lem.3.0.3).

Also in a more recent paper \cite{2015CMaPh.336..217V}, the RW weight system is also treated from the point of view of Atiyah classes. Our result gives more concrete formulae to the same construction. 


\providecommand{\href}[2]{#2}\begingroup\raggedright\endgroup

\end{document}